\newtheorem{thm}{Theorem}[section]
\newtheorem{prop}[thm]{Proposition}
\newtheorem{definition}[thm]{Definition}
\theoremstyle{remark}
\begin{document}
\bibliographystyle{abbrv}
\title{ The positive mass theorem for non-spin manifolds with distributional curvature }
\keywords{positive mass theorem, Ricci flow, distributional curvature}
\author{Yuqiao Li}
\address{University of Science and Technology of China, No. 96, JinZhai Road Baohe District, Hefei, Anhui, 230026, P.R.China.} \email{lyq112@mail.ustc.edu.cn}
\thanks {2010 Mathematics Subject Classification 53C44 83C99}
\thanks{The research is supported by the National Nature Science Foundation of China No. 11721101 No. 11526212}

\begin{abstract}
  We prove the positive mass theorem for manifolds with distributional curvature which have been studied in \cite{Lee2015} without spin condition. In our case, the manifold $M$ has asymptotically flat metric $g\in C^0\bigcap W^{1,p}_{-q}$, $p>n$, $q>\frac{n-2}{2}$. We show that the generalized ADM mass $m_{ADM}(M,g)$ is non-negative as long as $q=n-2$, and $g$ has non-negative distributional scalar curvature, bounded curvature in the Alexandrov sense with its distributional Ricci curvature belonging to certain weighted Lebesgue space and some extra conditions.
\end{abstract}

\maketitle
\numberwithin{equation}{section}

\section{Introduction}
Suppose that $(M,g)$ is an asymptotically flat n-manifold.
The positive mass theorem says that if the scalar curvature of $(M,g)$ is integrable and non-negative, then the ADM mass of $(M,g)$ is non-negative.
This was proved by Schoen and Yau \cite{schoen1979} under the condition of $n<8$ and by Witten \cite{Witten1981A} for spin manifolds of any dimension.
Bartnik \cite{Bartnik1986} showed that the mass is independent of the choice of the coordinate at infinity provided that the metric $g\in W^{2,p}_{-q}$, $p>n$, $q\geq\frac{n-2}{2}$ with $R(g)\in L^1$.
Miao \cite{Miao2002a} established the positive mass theorem on manifolds with corners along a hypersurface $\Sigma$ adding the condition which was seen as a replacement of the non-negativity of the scalar curvature that the mean curvature of the hypersurface in the compact part is larger than or equal to the mean curvature of the hypersurface in the noncompact part. He constructed smoothings $g_{\epsilon}$ of $g$ in a tubular neighborhood of $\Sigma$ and then used conformal change of $g_{\epsilon}$ to make the scalar curvature non-negative. McFeron and Szekelyhidi \cite{McFeron2012} proved Miao's theorem by Ricci flow to smooth the metric.

Recently, Lee and LeFloch \cite{Lee2015} generalized the scalar curvature and the ADM mass to distributional sense and proved a positive mass theorem for spin $n-$manifolds with $g\in C^0\bigcap W^{1,n}_{loc}$.  They generalized Witten's arguments on spin manifolds and formulate the Lichnerowicz Weitzenbock indentity in the distributional sense. Their main theorem is

\begin{thm}\label{generalizedmass}
   Let $M$ be a smooth spin $n$-manifold($n\geq 3$) with an asymptotically flat metric $g\in C^0\bigcap W_{-q}^{1,n}$, $q> \frac{n-2}{2}$, if the distributional scalar curvature $\ll R_g,u \gg\geq 0$ for every compactly supported smooth non-negative function $u$, then its generalized ADM mass $m_{ADM}(M,g)$ is non-negative, that is,
  \[ m_{ADM}(M,g)\geq 0 \]
  Moreover, $m_{ADM}(M,g)=0$ if and only if $(M,g)$ is isometric to Euclidean space.
\end{thm}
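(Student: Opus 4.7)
The plan is to follow Witten's spinor proof on a smooth approximation and then pass to the distributional limit. First I would fix an end $E \subset M$ with asymptotic coordinates, write $g = g_{\mathrm{eucl}} + h$ there with $h \in W^{1,n}_{-q}$, and mollify to obtain a family of smooth asymptotically flat metrics $g_\epsilon$ with $g_\epsilon \to g$ in $C^0_{\mathrm{loc}} \cap W^{1,n}_{-q}$, and with $m_{ADM}(M,g_\epsilon) \to m_{ADM}(M,g)$ (the ADM mass depends only on the asymptotic decay of the metric, not on the curvature, so this should follow from the same integration-by-parts that defines the generalized mass in \cite{Lee2015}). Choose the spin structure induced from $M$ and a constant spinor $\phi_\infty$ at infinity.

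For each $\epsilon$, solve Witten's Dirac equation $D_{g_\epsilon} \phi_\epsilon = 0$ with $\phi_\epsilon - \phi_\infty \in W^{2,2}_{-\delta}(M,g_\epsilon)$ for some $\delta \in \bigl(\tfrac{n-2}{2},\, q\bigr)$; this is standard once $g_\epsilon$ is smooth and asymptotically flat, and the solvability is uniform in $\epsilon$ because it depends only on the Euclidean end. The Lichnerowicz--Weitzenbock identity on $(M,g_\epsilon)$ together with integration against the cutoff exhausting $M$ yields the familiar Witten formula
\[
 m_{ADM}(M,g_\epsilon)\,|\phi_\infty|^2 \;=\; c_n \int_M \Bigl( |\nabla_{g_\epsilon} \phi_\epsilon|^2 + \tfrac14 R_{g_\epsilon}\,|\phi_\epsilon|^2 \Bigr)\, dV_{g_\epsilon},
\]
where $c_n>0$ is the dimensional constant. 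The right-hand side gives a uniform bound on $\|\nabla_{g_\epsilon}\phi_\epsilon\|_{L^2}$ and on $\|\phi_\epsilon-\phi_\infty\|_{W^{2,2}_{-\delta}}$, so a subsequence converges weakly to a limit spinor $\phi$ on $(M,g)$.

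The main obstacle is the scalar curvature term: for a $C^0 \cap W^{1,n}$ metric, $R_{g_\epsilon}$ is only defined classically for each $\epsilon$ and need not be pointwise non-negative, yet we must interpret $R_{g_\epsilon} |\phi_\epsilon|^2 \, dV_{g_\epsilon}$ in the limit as the distributional pairing $\ll R_g, |\phi|^2 \gg$. The key is that $|\phi_\epsilon|^2$ is (after cutoff) a legal compactly supported non-negative test function, and the distributional scalar curvature in \cite{Lee2015} is by construction exactly the limit of $\int R_{g_\epsilon} u \, dV_{g_\epsilon}$ against test functions $u$ whenever the convergence $g_\epsilon \to g$ in $W^{1,n}_{\mathrm{loc}}$ is controlled. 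Thus passing to the limit in the Witten formula (using weak lower semicontinuity for the $|\nabla \phi|^2$ term and the definition of $\ll R_g, \cdot \gg$ for the scalar term) gives
\[
 m_{ADM}(M,g)\,|\phi_\infty|^2 \;\geq\; c_n \Bigl( \int_M |\nabla_g \phi|^2\, dV_g \;+\; \tfrac14 \ll R_g,\, |\phi|^2 \gg \Bigr) \;\geq\; 0,
\]
where the final inequality uses the hypothesis on $R_g$ after approximating $|\phi|^2$ by smooth compactly supported non-negative functions, which requires enough regularity of $\phi$ (at least $W^{1,2}_{\mathrm{loc}}$, which follows from the elliptic estimate).

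For the rigidity case $m_{ADM}(M,g)=0$, the right-hand side must vanish for every choice of $\phi_\infty$, so a basis of asymptotic values produces a full family of $W^{1,2}_{\mathrm{loc}}$ parallel spinors on $(M,g)$. Building an orthonormal frame from their squared norms and Clifford products gives $n$ $g$-parallel vector fields in the weak sense, which forces the distributional Riemann tensor to vanish; combined with the $C^0 \cap W^{1,n}$ regularity and asymptotic flatness, a Bartnik-type development argument identifies $(M,g)$ isometrically with $(\mathbb{R}^n, g_{\mathrm{eucl}})$. The hardest technical point in the whole scheme is controlling the scalar curvature pairing under the mollification, since $|\phi_\epsilon|^2$ is not itself a fixed test function but varies with $\epsilon$, so one needs a joint convergence statement bridging the smooth identity and the distributional definition.
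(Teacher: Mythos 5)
The paper does not prove this theorem; it imports it verbatim from Lee--LeFloch \cite{Lee2015} and describes their route only briefly: \emph{``They generalized Witten's arguments on spin manifolds and formulate the Lichnerowicz Weitzenbock identity in the distributional sense.''} That is, the spinor bundle, the Dirac operator, and the Weitzenbock identity are all set up directly for the rough metric $g \in C^0 \cap W^{1,n}_{-q}$, so that the scalar curvature term enters from the start as the distributional pairing $\ll R_g, |\phi|^2 \gg$ rather than as a limit of classical integrals. Your proposal is a genuinely different strategy: mollify $g$ to smooth metrics $g_\epsilon$, run the classical Witten argument for each $\epsilon$, and then pass to the limit. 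This is not what the cited proof does, and the place where you yourself write ``the hardest technical point'' is in fact a real gap, not a technicality.

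Concretely, the term $\int_M R_{g_\epsilon}\,|\phi_\epsilon|^2\, dV_{g_\epsilon}$ couples two $\epsilon$-dependent objects, and the convergence you need is not established by the definition of the distributional scalar curvature, which controls $\ll R_{g_\epsilon}, u \gg \to \ll R_g, u \gg$ only for a \emph{fixed} test function $u$. Since $g$ has just one weak derivative in $L^n$, the classical $R_{g_\epsilon}$ involves $\partial^2 g_\epsilon$, which typically blows up like $\epsilon^{-1}$; to even make sense of the limit one has to rewrite the integral via the $V$--$F$ integration by parts, at which point the term $\int V_\epsilon \cdot \bar{\nabla}|\phi_\epsilon|^2$ requires simultaneous control of the weak convergence of $\bar{\nabla}|\phi_\epsilon|^2$, uniform $W^{1,2}_{loc}$ bounds on spinors living on \emph{different} bundles as $g_\epsilon$ varies, and a way to identify these bundles. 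None of this is supplied. Two further unproved assertions are load-bearing: (a) $m_{ADM}(M,g_\epsilon) \to m_{ADM}(M,g)$ under mollification is not automatic --- the present paper itself only proves an analogue (Section 5) with additional hypotheses such as $q=n-2$ and $R_g$ a finite measure outside a compact set, precisely because the generalized ADM mass involves a $\liminf$ and $\inf$; and (b) ``solvability uniform in $\epsilon$ because it depends only on the Euclidean end'' is false: the invertibility estimates for $D_{g_\epsilon}$ on $W^{2,2}_{-\delta}$ depend on the geometry on the compact part as well, which is where $g_\epsilon$ varies most. Lee--LeFloch's distributional formulation is designed exactly to avoid introducing a smooth approximant, and hence avoids all of these issues; if you want to pursue the mollification route you would need to supply quantitative estimates closing the gaps above, and at that point the argument would look quite different from the original.
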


We consider the problem without the assumption that $M$ is spin. Certainly, we have to add some extra conditions on the geometry of $(M,g)$ because the proof of Schoen and Yau for manifolds with $n<8$ is hard to generalize in weak regularity. Our approach is to construct smoothings $g_{\epsilon}$ of the metric and use Ricci flow to ensure the positivity of the scalar curvature. Then, we use classical positive mass theorem on the smoothings to get the mass of $g_{\epsilon}$ is non-negative.
Finally, we get the mass of $g_{\epsilon}$ converges to that of $g$ as $\epsilon$ tends to zero under some assumptions and thus obtain the mass of $g$ is also non-negative. Our main theorem is the following.

\begin{thm}\label{mainth}
   Let $M$ be a smooth $n$-manifold $(3\leq n\leq 7)$ with an asymptotically flat metric $g\in C^0\bigcap W^{1,p}_{-q}$, $p>n$, $q>\frac{n-2}{2}$. Assume that
  \begin{enumerate}[(i)]
    \item $g$ has bounded curvature $Rm(g)$
    \[ C'\leq Rm(g) \leq C \]
    in the sense of Aleksandrov, and the set $\Omega=\{ x\in M:Rm(g)(x)\leq0 \}$ is compact;
    \item $g$ has non-negative scalar curvature distribution $R_g$
    \[ \ll R_g,u \gg\geq 0 \]
    for any smooth compactly supported non-negative function $u$ on $M$;
    \item the Ricci curvature distribution $\ll R_{ij},u \gg$ is in $L^p_{-q-2}$;
     \item
    \[ \lim_{\delta\rightarrow0}\frac{1}{\delta}\int_{0<l(x)<\delta}V\cdot\bar{\nabla}ld\mu_{h}=0 \]
    where $l(x)=dist(x,\Omega)$, for any smooth background metric $h$, $\cdot$ and $\bar{\nabla}$ are inner product and Levi-Civita connection with respect to $h$, $V$ is as in the definition of scalar curvature distribution;
    \item $R_g$ is a finite, signed measure outside a compact set;
    \item $q=n-2$.
  \end{enumerate}
Then, its generalized ADM mass $m_{ADM}(M,g)$ is non-negative. Moreover, $m_{ADM}(M,g)=0$ if and only if $(M,g)$ is isometric to the Euclidean space.
\end{thm}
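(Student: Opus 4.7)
The plan is to follow the smoothing-via-Ricci-flow strategy of McFeron and Szekelyhidi \cite{McFeron2012}, adapted to the lower-regularity distributional setting of Lee and LeFloch \cite{Lee2015}. First I would mollify the metric $g$ in a background chart to obtain a family of smooth asymptotically flat metrics $g_\epsilon$ with $g_\epsilon \to g$ in $C^0 \cap W^{1,p}_{-q}$. Hypothesis (i) provides uniform two-sided sectional curvature bounds of $g_\epsilon$ in a \emph{smooth} sense, away from a fixed neighborhood of $\Omega$, and hypothesis (iii) upgrades this to uniform $L^p_{-q-2}$ control on $\mathrm{Ric}(g_\epsilon)$. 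The key output of this step is that the negative part $R(g_\epsilon)^-$ of the scalar curvature of $g_\epsilon$ is uniformly controlled in $L^1$ and, crucially, can be made to tend to $0$ as $\epsilon \downarrow 0$; this is where hypothesis (ii) enters, together with hypothesis (iv), which is precisely the condition needed to discard the boundary integral over $\partial\Omega$ when one tests the distributional definition of $R_g$ against the mollifier.

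Next I would run the Ricci--DeTurck flow on each $(M,g_\epsilon)$ on a short time interval $[0,\tau]$ with $\tau$ independent of $\epsilon$, using the uniform Aleksandrov bounds from (i) to obtain short-time existence, uniform $C^\infty$ estimates on compact sets away from $\Omega$, and preservation of asymptotic flatness in the $W^{2,p}_{-q}$ class. Along the flow, the scalar curvature satisfies the standard parabolic equation $\partial_t R = \Delta R + 2|\mathrm{Ric}|^2$, and a maximum principle comparison shows that $R(g_\epsilon(t))^-$ decays like $\|R(g_\epsilon)^-\|_{L^\infty}/(1 + ct)$, so once $\epsilon$ is small enough and $t > 0$ fixed, $R(g_\epsilon(t)) \geq 0$ pointwise. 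Since the evolving metric is smooth, asymptotically flat, and has non-negative scalar curvature, and since $3 \leq n \leq 7$, the classical Schoen--Yau theorem applies to give $m_{ADM}(g_\epsilon(t)) \geq 0$.

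The final step is convergence of the mass: $m_{ADM}(g_\epsilon(t)) \to m_{ADM}(g)$ as $t \downarrow 0$ and then $\epsilon \downarrow 0$. Convergence along the flow follows from the evolution formula for the ADM mass (the integrand is the time-integral of $R$ and a divergence term vanishing at infinity), combined with (v), which says that $R_g$ is a finite signed measure outside a compact set and hence contributes a controlled bulk term. Convergence from $g_\epsilon$ to $g$ uses Bartnik's integration by parts, which is justified precisely at the critical decay $q = n-2$ of hypothesis (vi); the Ricci bound (iii) and the $W^{1,p}_{-q}$ convergence of the mollifications provide the remaining ingredients to pass to the limit in the chart integral at infinity. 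For the rigidity statement, if $m_{ADM}(g)=0$ then passing to a subsequential limit of the Ricci flows one obtains a smooth Ricci-flat asymptotically flat metric realized as a $C^0$ limit of $g_\epsilon(t_\epsilon)$; the classical rigidity clause in Schoen--Yau then forces the limit, and hence $g$, to be isometric to Euclidean space.

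The main obstacle, I expect, is the transition from non-negativity of $R_g$ as a distribution to an $L^1$-small negative part of $R(g_\epsilon)$ usable by the parabolic maximum principle: the mollification of $g$ does not commute with the nonlinear operator $R(\cdot)$, and one has to exploit (ii), (iii), and (iv) together to show that the commutator error converges to zero in $L^1$ outside $\Omega$, with a separate ad hoc argument handling the compact set $\Omega$ on which the sectional curvature may change sign. A secondary difficulty is the borderline character of $q=n-2$: one must verify that the ADM boundary integral converges in the limit without any slack in the decay rate, which forces careful use of hypothesis (iii) and precludes the usual $q > \frac{n-2}{2}$ arguments used for the existence of the mass but not for its continuity.
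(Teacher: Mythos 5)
Your overall strategy matches the paper's: smooth the metric, run Ricci--DeTurck flow for positive time, apply the classical Schoen--Yau positive mass theorem to the smooth evolved metric, and pass to the limit. However, two specific steps in your outline contain genuine gaps.

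First, you propose to ``mollify the metric $g$ in a background chart.'' Naive mollification does not preserve the two-sided Aleksandrov curvature bounds of hypothesis (i), because the curvature tensor depends nonlinearly on the metric and its derivatives; the curvature of a mollified metric is not a mollification of the curvature. This matters in several places: the paper needs the smoothings $g_\epsilon$ to have uniformly bounded curvature (to ensure short-time existence of the Ricci flow with a time interval uniform in $\epsilon$, and to ensure $W^{2,p}_{loc}$ regularity via harmonic coordinates, cf.\ Theorem~\ref{loc}), and it needs the set $\{x : R(g_\epsilon)(x) < 0\}$ to remain compact (inherited from the compactness of $\Omega$). The paper obtains all of this by invoking Simon's approximation theorem for metrics with bounded curvature (Theorem~\ref{bdd}), which produces smooth approximants whose sectional (and Ricci) curvatures satisfy the same two-sided bounds up to $1/\alpha$ and which converge in $C^{1,\beta}_{loc}$ and $C^0(M)$. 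Without that specific tool, your hypotheses on $g_\epsilon$ do not follow.

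Second, your claim that ``a maximum principle comparison shows that $R(g_\epsilon(t))^-$ decays like $\|R(g_\epsilon)^-\|_{L^\infty}/(1+ct)$, so once $\epsilon$ is small enough and $t>0$ fixed, $R(g_\epsilon(t))\geq 0$ pointwise'' is incorrect. The maximum principle for $\partial_t R=\Delta R + 2|\mathrm{Ric}|^2 \geq \Delta R + \tfrac{2}{n}R^2$ yields $R_{\min}(t)\geq R_{\min}(0)/(1-\tfrac{2t}{n}R_{\min}(0))$, which improves but \emph{remains negative} for all finite $t$ when $R_{\min}(0)<0$. Moreover, the smoothings only satisfy a fixed lower bound $R(g_\epsilon)\geq B$ with $B<0$ independent of $\epsilon$; what goes to zero with $\epsilon$ is the $L^1$ norm of the negative part, not the $L^\infty$ norm. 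The correct mechanism (Theorem~\ref{14}, from McFeron--Szekelyhidi) is that the $L^1$-smallness of $R(g_\epsilon(t))^-$ persists along the flow uniformly in $\epsilon$, and hence the \emph{limit flow} $g(t)=\lim_{\epsilon\to 0}g_\epsilon(t)$ has pointwise non-negative scalar curvature; one then applies the classical positive mass theorem to $g(t)$, not to $g_\epsilon(t)$, and uses mass preservation $m(g_\epsilon(t))=m(g_\epsilon)$ plus the convergence $\liminf_\epsilon m(g_\epsilon)=m_{ADM}(g)$ (this last step using $q=n-2$ and the signed-measure hypothesis, as you note). Your remaining steps on mass convergence and rigidity are in the right spirit, but they rest on these two repaired ingredients.
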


Briefly speaking, we will construct smooghings $g_{\epsilon}$ that approaches $g$ and these conditions guarantee that $g_{\epsilon}$ nearly satisfies the assumptions of the positive mass theorem. We will see in the following discussion that $(i),(iii)$  imply $g_{\epsilon}\in W^{2,p}_{-q}$; $(ii),(iv)$ ensure that $R_{g_{\epsilon}}$ is almost nonnegative; $(v),(vi)$ connect the mass of $g_{\epsilon}$ to that of $g$.

In section 2 we will state some basic definitions and properties about ADM mass and generalized ADM mass. Then a brief description of the Ricci flow and mass under the Ricci flow is discussed. In section 3, we construct the smoothings $g_{\epsilon}$ and we can see where the extra conditions are used from the construction. In section 4, we discuss the change of scalar curvature after smoothing. In section 5, the change of mass is discussed and we get the convergence of mass. In section 6, we prove the main theorem.

\hspace{0.4cm}

\noindent {\bf Acknowledgements}. I would like to express my gratitude to my advisor Professor Jiayu Li. He gave me so much useful suggestions and inspired me to complete this work.

\section{Preliminaries}

\subsection{The ADM mass}

We first recall several definitions about weighted Sobolev spaces and asymptotically flat manifolds in \cite{Bartnik1986}. Let $r=|x|$, $\sigma=(1+r^2)^{1/2}$ for $x\in\mathbb{R}^n$, $n\geq 3$.

\begin{definition}
  The weighted Lebesgue space $L^p_{\delta}$, $1\leq p\leq \infty$, with weight $\delta\in\mathbb{R}$ is the space of all functions $u$ in $L^p_{loc}(\mathbb{R}^n)$ such that the following $L^p_{\delta}$ norm is finite
  \begin{equation*}
    \|u\|_{L^p_{\delta}}= \begin{dcases}
                            \left( \int_{\mathbb{R}^n}|u|^p\sigma^{-\delta p-n}dx \right)^{1/p}, & p<\infty, \\
                            ess\sup_{\mathbb{R}^n}(\sigma^{-\delta}|u|), & p=\infty.
                          \end{dcases}
  \end{equation*}
   The weighted Sobolev space $W^{k,p}_{\delta}$ is the space of all functions $u$ with finite norm
   \[ \|u\|_{W^{k,p}_{\delta}}=\sum^k_{j=0} \|D^ju\|_{L^p_{\delta-j}} \]
\end{definition}

\begin{definition}
  A smooth n-manifold $(M,g)$ with complete Riemannian metric $g\in W^{1,q}_{loc}$ for some $n<q<\infty$ is said to be asymptotically flat if there is a compact set $K\subset\subset M$ such that there is a diffeomorphism $\Phi : M\backslash K\rightarrow\mathbb{R}^n\backslash B_R$ where $B_R$ is a closed ball in $\mathbb{R}^n$ satisfies
  \[ (\Phi_\ast g)_{ij}-\delta_{ij}\in W^{1,q}_{-\tau},\  \text{for some decay rate $\tau>0$} \]
\end{definition}

We may consider $\Phi $ as the coordinates of $M$ at infinity, so we will usually write $g\in W^{1,q}_{-\tau}(M)$ to denote that $(M,g)$ is asymptotically flat. Fix a metric $h$ on $M$ which is the Euclidean metric outside $K$, and let $r$ be the smooth function on $M$ such that $r=|x|$ outside $\Phi^{-1}(B_2)$ and $r=1$ on $\Phi^{-1}(B_1)$. Then we can also define the weighted Holder norm $C^{k,\alpha}_{\delta}$ as
\[ \|u\|_{C^{k,\alpha}_{\delta}}=\sum_{0}^{k}\sup_M r^{-\delta+j}|\nabla^ju|+\sup_{x,y\in M}\left( \min(r(x),r(y))^{-\delta+k+\alpha}\frac{|\nabla^ku(x)-\nabla^ku(y)|}{|x-y|^{\alpha}}\right) \]
where the derivatives and norms are taken with respect to $h$.

If $g\in C_{-\delta}^{1,\alpha}$ for $\delta>\frac{n-2}{2}$ and the scalar curvature $R(g)\in L^1$, then the ADM mass is definded by
\[ m(g)=\frac{1}{c(n)}\lim_{r\rightarrow\infty}\int_{\partial B_r}(g_{ij,j}-g_{jj,i})\text{d}S^i \]
where the derivatives are taken with respect to the Euclidean metric. Bartnik showed that the mass is a geometric invariant under the above asymptotic decay condition, that is the mass is independent of the choice of the coordinate. The idea is based on the expression of the scalar curvature of $(M,g)$ in local coordinates, and the scalar curvature can be written as
\begin{equation*}
  \begin{split}
     R(g)= & |g|^{-\frac12}\partial_i(|g|^{\frac12}g^{ij}(\Gamma_j-\frac12\partial_j(\log|g|))) \\
       & -\frac12g^{ij}\Gamma_i\partial_j(\log|g|)+g^{ij}\Gamma_{ik}^l\Gamma_{jl}^k
  \end{split}
\end{equation*}
where $\Gamma^i=g^{kl}\Gamma_{kl}^i$. For asymptotically flat metric in $C_{-\delta}^{1,\alpha}$ for $\delta>\frac{n-2}{2}$, we have
\[ |g|^{\frac12}g^{ij}(\Gamma_j-\frac12\partial_j(\log|g|))=g_{ij,j}-g_{jj,i}+\mathcal{O}(r^{-1-2\delta}) \]
\begin{equation*}
  \begin{split}
    c(n)m(g)= &\int_{M\backslash B_r}R(g)\text{d}V+\int_{\partial B_r}|g|^{\frac{1}{2}}g^{ij}(\Gamma_j-\frac{1}{2}\partial_j(\log|g|))dS^i\\
    &-\int_{M\backslash B_r}\left(\frac12g^{ij}\Gamma_i\partial_j(\log |g|) -g^{ij}\Gamma_{ik}^l\Gamma_{jl}^k\right)\text{d}V \\
       =& \int_{M\backslash B_r}R(g)\text{d}V+\int_{\partial B_r}(g_{ij,j}-g_{jj,i})dS^i+\mathcal{O}(r^{-\lambda})
  \end{split}
\end{equation*}
for some $\lambda>0$. The positive mass theorem is the following.

\begin{thm}\label{massthm}
  If $M$ is spin or $n\leq 7$, $g\in C_{-\delta}^{1,\alpha}$, $\delta>\frac{n-2}{2}$, $\alpha\in (0,1)$, $R(g)\geq 0$, $R(g)\in L^1(M)$, then $m(g)\geq 0$, and equality holds only when $(M,g)$ is isometric the to Euclidean space.
\end{thm}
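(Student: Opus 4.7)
The plan is to treat the two alternatives in the hypothesis separately. For the spin case, I would run Witten's argument. Fix a constant spinor $\psi_0$ in the asymptotic chart and solve the Dirac equation $D\psi = 0$ with $\psi - \psi_0 \in W^{1,2}_{-\delta}$; existence is a Fredholm/elliptic statement on weighted Sobolev spaces for $D$ on asymptotically flat manifolds, and injectivity on the decaying part of the kernel is itself a consequence of the Lichnerowicz--Weitzenböck identity $D^2 = \nabla^\ast\nabla + R/4$ together with $R \geq 0$. Once such a $\psi$ is produced, I would apply Lichnerowicz--Weitzenböck on $B_r$, integrate by parts, and let $r\to\infty$: the bulk integrand $|\nabla\psi|^2 + (R/4)|\psi|^2$ is non-negative, while the boundary term, expanded in the asymptotic chart using $g_{ij} - \delta_{ij} \in C^{1,\alpha}_{-\delta}$, converges to $c(n)\, m(g)\, |\psi_0|^2$. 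This gives $m(g) \geq 0$, and in the equality case I would let $\psi_0$ vary over a basis of $\mathbb{C}^N$ to produce a full parallel frame of spinors, which forces $\mathrm{Rm}(g) \equiv 0$ and, combined with asymptotic flatness, yields isometry to $\mathbb{R}^n$.

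For $3 \leq n \leq 7$ in the non-spin case, I would use Schoen--Yau's inductive minimal-hypersurface argument. Assuming $m(g) < 0$ for contradiction, the asymptotic expansion of $g$ lets me place barriers at infinity (nearly flat hyperplanes whose gap is controlled by $m$), and solving a sequence of Plateau problems with these barriers produces a complete, two-sided, area-minimizing hypersurface $\Sigma$ asymptotic to a flat hyperplane. For $n \leq 7$ the interior regularity theory for codimension-one area minimizers (Federer) ensures that $\Sigma$ is smooth. The stability inequality $\int_\Sigma (|A|^2 + \mathrm{Ric}(\nu,\nu))\varphi^2 \leq \int_\Sigma |\nabla\varphi|^2$ together with the Gauss equation, after a conformal change $\tilde g = u^{4/(n-3)} g|_\Sigma$, produces on $\Sigma$ an asymptotically flat metric of non-negative scalar curvature whose ADM mass is strictly negative; this contradicts the theorem in dimension $n-1$. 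The induction bottoms out at $n=3$, where $\Sigma$ is a stable complete minimal surface in $(M,g)$ asymptotic to a plane, and Gauss--Bonnet applied to an exhaustion of $\Sigma$ contradicts $R(g) \geq 0$ and $m(g) < 0$.

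The rigidity statement $m(g) = 0 \Rightarrow (M,g) \cong \mathbb{R}^n$ comes for free in the spin argument from the parallel spinor frame; in the non-spin $n \leq 7$ case one first shows that the set $\{g' : R(g') \geq 0,\ m(g') = 0\}$ is rigid by a deformation argument (perturb $g$ conformally or along $\mathrm{Ric}$ to decrease $m$ unless $\mathrm{Ric} \equiv 0$), then uses asymptotic flatness to upgrade Ricci-flatness to flatness, and finally to Euclidean.

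The main technical obstacle on both sides is the low $C^{1,\alpha}_{-\delta}$ regularity of $g$: for Witten, the Fredholm theory for $D$ on weighted Sobolev spaces at this regularity and the justification of the integration by parts near infinity require care (carried out by Bartnik and Parker--Taubes); for Schoen--Yau, the minimal-surface regularity, the existence of barriers, and the induction step must all be verified in this regularity class. Once these are in place, the identification of the boundary integral with $c(n) m(g)$ is a direct asymptotic computation using the expressions for $R(g)$ and the Christoffel symbols recalled above.
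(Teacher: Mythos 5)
The paper does not prove Theorem~\ref{massthm}; it is recalled as the classical positive mass theorem, with the $n\le 7$ case attributed to Schoen--Yau \cite{schoen1979} and the spin case to Witten \cite{Witten1981A}, and then invoked later (in Section~6) as a black box once the smoothed, Ricci-flowed metrics $g(t)$ are shown to meet its hypotheses. So there is no in-paper proof to compare against; what you have written is a reasonable sketch of the two standard external arguments. Your outline of the Witten side (solve $D\psi=0$ with prescribed spinor at infinity via weighted Fredholm theory, apply Lichnerowicz--Weitzenb\"ock and read off $c(n)\,m(g)\,|\psi_0|^2$ from the boundary term, deduce rigidity from a parallel spinor frame) and of the Schoen--Yau side (barriers from the asymptotic expansion, Plateau problem producing a complete stable minimal hypersurface, Federer regularity for $n\le 7$, conformal factor $u^{4/(n-3)}$ in the dimension-reduction step, Gauss--Bonnet at the $n=3$ base) are both accurate in their broad strokes, including the rigidity-by-perturbation argument for $m=0$. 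Two cautions worth flagging if you were to flesh this out: (a) the low $C^{1,\alpha}_{-\delta}$ regularity, which you correctly identify as the delicate point, is precisely what makes the weighted elliptic theory and the boundary-term computation nontrivial (Bartnik, Parker--Taubes, Lee--Parker), and in the paper's actual use the metric $g(t)$ is smooth on $(0,T]$, which sidesteps this; (b) in the Schoen--Yau induction the conformal change lives on the $(n-1)$-dimensional $\Sigma$, so the exponent $4/(n-3)$ you wrote is the Yamabe exponent for $\Sigma$, which is correct, but one must also check that $\Sigma$ with the induced/conformal metric remains asymptotically flat of the right decay rate for the induction to close.
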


\subsection{The generalized ADM mass}

In this paper, we are given a smooth $n$-manifold $M$ with $n\geq 3$ endowed with a fixed smooth background metric denoted by $h$.
Let us describe the definitions of generalized scalar curvature and the generalized ADM mass originated from \cite{Lee2015}.

\begin{definition}\label{scalar}
Let $(M,h)$ be a smooth Riemannian manifold, $h$ be the smooth background metric.Given any Riemannian metric $g\in L^{\infty}_{loc}\bigcap W^{1,2}_{loc}$ on $M$ with
$g^{-1}\in L^{\infty}_{loc}$, for any compactly supported smooth function $u:M\rightarrow \mathbb{R} $, the scalar curvature distribution $R_g$ is defined by
\[ \ll R_g,u \gg :=\int_{M}\left(-V\cdot \bar{\nabla}(u\frac{\text{d}\mu_g}{\text{d}\mu_h})+Fu\frac{\text{d}\mu_g}{\text{d}\mu_h}\right)\text{d}\mu_h \]
where
\[ \Gamma_{ij}^k=\frac12g^{kl}(\bar{\nabla}_ig_{jl}+\bar{\nabla}_jg_{il}-\bar{\nabla}_lg_{ij})=\Gamma_{ij}^k(g)-\bar{\Gamma}_{ij}^k \]
\[ V^k=g^{ij}\Gamma_{ij}^k-g^{ik}\Gamma_{ji}^j=g^{ij}g^{kl}(\bar{\nabla}_j g_{il}-\bar{\nabla}_l g_{ij}) \]
\[ F=\bar{R}-\bar{\nabla}_k g^{ij}\Gamma_{ij}^k+\bar{\nabla}_kg^{ik}\Gamma_{ji}^j+g^{ij}(\Gamma_{kl}^k\Gamma_{ij}^l-\Gamma_{jl}^k\Gamma_{ik}^l) \]
and $\bar{\nabla}$,$\bar{R}$ are the Levi-Civita connection and scalar curvature with respect to $h$, $\Gamma_{ij}^k(g)$ and $\bar{\Gamma}_{ij}^k$ denote the Christoffel symbols of $g$ and $h$ respectively, the dot product is taken using the metric $h$, and $\text{d}\mu_h$ and $\text{d}\mu_g$ denote the volume measures with $h$ and $g$, respectively.
\end{definition}

In the case of $g\in C^2$, the scalar curvature distribution $R_g$ is well-defined in the usual way and is a continuous function; in this case, $\ll R_g,u \gg=\int_M R_gud\mu_g$. And $\ll R_g,u\gg$ does not depend on the choice of the background metric $h$, as long as $g\in C^0\bigcap W^{1,2}_{loc}$.
$R_g$ is said to be non-negative when $\ll R_g,u \gg\geq 0$ for every non-negative test function $u$.
In \cite{Lee2015}, they also showed that if $g\in C^0\bigcap W^{1,n}_{loc}$, then $\ll R_g,u \gg$ makes sense for all compactly supported functions $u\in L^{\frac{n}{n-2}}$ with $\bar{\nabla}u\in L^{\frac{n}{n-1}}$.

Now let $M$ be a smooth n-dimensional asymptotically flat manifold such that there is a compact set $K\subset M$ and a diffeomorphism $\Phi$ between $M\backslash K$ and $\mathbb{R}^n\backslash B_1(0)$, where $B_1(0)$ denotes the unit ball in $\mathbb{R}^n$. Choose any smooth background metric $h$ on $M$ such that $h_{ij}=\delta_{ij}$ in the coordinate chart $M\backslash K\cong\mathbb{R}^n\backslash B_1(0)$ determined by $\Phi$. We also choose a smooth positive function $r$ on $M$ that is the radial coordinate on $M\backslash K\cong\mathbb{R}^n\backslash B_1(0)$ and is less than $2$ on $K$.

\begin{definition}
  Let $M,\Phi,h,r$ be as above. For any $p\geq 1$ and $q>0$ , a $L^{\infty}_{loc}$ Riemannian metric $g$ on $M$ with $L^{\infty}_{loc}$ inverse is $W^{k,p}_{-q}$, then the generalized ADM mass of $(M,g)$ is defined as
  \[ m_{ADM}(M,g):=\frac{1}{2(n-1)w_{n-1}}\inf_{\epsilon>0}\liminf_{\rho\rightarrow +\infty}\left(\frac{1}{\epsilon}\int_{\rho<r<\rho+\epsilon}V\cdot\bar{\nabla}r\text{d}\mu_h\right) \]
where $V$ is the vector field in the definition of scalar curvature distribution and $w_{n-1}$ is the volume of the standard unit $(n-1)$-sphere.
\end{definition}

From the definition of the distributional ADM mass, if $g\in W^{2,p}_{-q}$ for $p>n$, $q> \frac{n-2}{2}$ and its scalar curvature is integrable, then the generalized ADM mass is equivalent to the usual definition of ADM mass. So $m_{ADM}$ is actually a generalization of the usual ADM mass.

\subsection{The Ricci flow and mass along Ricci flow}

The Ricci flow is a family of metrics $g(t)$ on a Riemannian manifold $M$ satisfying the equation
\[ \frac{\partial}{\partial t}g=-2Ric \]
where $Ric$ is the Ricci tensor of the time-dependent metric $g(t)$.
For any $C^{\infty}$ metric $g_0$ on a closed manifold $M^n$, there exists a unique solution $g(t)$, $t\in [0,\epsilon)$, to the Ricci flow equation for some $\epsilon>0$, with $g(0)=g_0$ \cite{Hamil}. Because of the diffeomorphism invariance of the Ricci tenser, the Ricci flow equation is only weakly parabolic, so we have to use the DeTurck's trick to get an equivalent flow which is strictly parabolic. Given a fixed background connection $\tilde{\Gamma}$ which is assumed to be the Levi-Civita connection of a metric $\tilde{g}$, we define the Ricci-DeTurck flow by
\[ \frac{\partial}{\partial t}g_{ij}=-2R_{ij}+\nabla_iW_j+\nabla_jW_i \]
\[ g(0)=g_0 \]
where the time-dependent 1-form $W=W(t)$ is defined by
\[ W_j=g_{jk}g^{pq}(\Gamma^k_{pq})-\tilde{\Gamma}^k_{pq}. \]

Shi \cite{Shi1989} also showed the short existence of the Ricci flow on noncompact manifold with bounded curvature. Given an asymptotically flat n-manifold $(M,g)$, $g\in C^2_{\delta}$, assume that $g$ has uniformly bounded curvature, then, by the result of Shi, there is a Ricci flow $g(t)$ with $g(0)=g$ in a short time interval.  Dai-Ma \cite{Dai2007} proved that along the Ricci flow, the metric remains asymptotically flat of the same order, i.e. $g(t)\in C^2_{\delta}$,  by the maximum principle of Ecker-Huisken \cite{Ecker1991}. Li \cite{Li2018} recently also showed that the asymptotically flat condition is preserved under Ricci flow and if the initial metric $g(0)\in C^2_{\delta}$ with $\delta>\frac{n-2}{2}$ and $R\in L^1$, then the mass is unchanged. He also showed that the long-time existence of the Ricci flow on an asymptotically flat 3-manifold with nonnegative scalar curvature will imply the positive mass theorem.

\section{Construction of smoothings}

Let $M$ be an asymptotically flat $n$-manifold $(n\geq 3)$ with background metric $h$. Assume that $g\in C^0\bigcap W^{1,p}_{-q}$, $p>n$, $q>\frac{n-2}{2}$ is a Riemannian metric on $M$, then by the Sobolev inequality of weighted Sobolev space from \cite{Bartnik1986}, we have
\[ \| g \|_{\infty,-q}\leq C\| g \|_{1,p,-q} \]
for some constant $C$ independent of $q$, so $g\in L^{\infty}_{-q}$. That is $|g|=o(r^{-q})$ as $r\rightarrow\infty$ by the definition of the weighted Sobolev space. Also, from the following inequality, for $0<\alpha \leq 1-\frac{n}{p}<1$, then
\[ \| g \|_{C^{0,\alpha}_{-q}}\leq C\| g \|_{1,p,-q} \]
so $g\in C^{0,\alpha}_{-q}$, that is $\| g \|_{C^{0,\alpha}_{-q}(M\backslash B_R)}=o(1)$ as $R\rightarrow\infty$, where $B_R$ is a geodesic ball of radius $R$ in $M$.

Let us recall some facts about complete manifold with bounded curvature.
From \cite{Nikolaev1983}, we know that such metrics are locally $C^{1,\alpha}$.
\begin{thm}\label{loc}
  Let $M$ be a space with bounded curvature. Then in a neighborhood of each point, we can introduce a harmonic coordinate system. The components $g_{ij}$ of the metric tensor in any harmonic coordinate system in $M$ are continuous functions of $W^{2,p}(\Omega)$ for any $p\geq 1$ where $\Omega\subset\mathbb{R}^n$ is a domain of harmonic coordinate.
\end{thm}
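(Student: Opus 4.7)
The plan is to prove the statement by approximation from smooth metrics. Because $M$ is only assumed to have bounded curvature in the Alexandrov sense, the metric is a priori not smooth, so ``harmonic coordinates'' must be constructed by passing to a limit from smooth approximations. Fix $x_0\in M$ and, via a Nikolaev-style smoothing (geodesic averaging in normal charts, or a mollification adapted to the synthetic geometry), produce smooth Riemannian metrics $g_\epsilon$ on a neighborhood $U$ of $x_0$ with two-sided sectional-curvature bounds independent of $\epsilon$ and with $g_\epsilon\to g$ locally uniformly. Cheeger's lemma then gives a uniform lower bound on the injectivity radius at $x_0$.

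On each smooth $(U,g_\epsilon)$, apply the Jost--Karcher theorem to obtain a harmonic coordinate chart $\Phi_\epsilon:B_r(x_0)\to\mathbb{R}^n$ whose radius $r$ and quasi-isometry constants depend only on $n$, the curvature bound, and the injectivity radius estimate. In these harmonic coordinates the Ricci tensor takes the semi-linear elliptic form
\[ R_{ij}(g_\epsilon) \;=\; -\tfrac{1}{2}\, g_\epsilon^{kl}\,\partial_k\partial_l (g_\epsilon)_{ij} \;+\; Q\bigl(g_\epsilon,\partial g_\epsilon\bigr), \]
with $Q$ polynomial and quadratic in $\partial g_\epsilon$. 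Two-sided sectional-curvature bounds control $\mathrm{Ric}(g_\epsilon)$ in $L^\infty$ uniformly in $\epsilon$, and standard interior $L^p$ elliptic estimates then yield a uniform $W^{2,p}$ bound on $(g_\epsilon)_{ij}$ on a slightly smaller ball, for every $p\geq 1$; a uniform $C^{1,\alpha}$ bound falls out as well.

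Passing to a subsequence $\epsilon_k\to 0$, the charts $\Phi_{\epsilon_k}$ converge in $C^{1,\alpha}$ to a bi-Lipschitz chart $\Phi$, and the coefficients $(g_{\epsilon_k})_{ij}$ converge weakly in $W^{2,p}$ to limits which are the components of $g$ in the chart $\Phi$; harmonicity of $\Phi$ for $g$ follows by passing to the limit in the weak Laplace equation $\int \bar\nabla u_\epsilon^k\cdot\bar\nabla\varphi\,d\mu_{g_\epsilon}=0$ using uniform convergence of $g_\epsilon$. The principal obstacle is the very first step: constructing smoothings $g_\epsilon$ that actually \emph{preserve} two-sided sectional curvature bounds. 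Naive local mollification does not preserve both bounds simultaneously, yet two-sidedness is precisely what is needed to close the elliptic estimate via bounded Ricci. Nikolaev's smoothing, tailored to the synthetic Alexandrov condition, is the technical heart of the argument, while everything downstream is comparatively standard harmonic-coordinate theory and elliptic regularity.
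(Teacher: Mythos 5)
The paper does not prove this statement at all: Theorem~\ref{loc} is quoted verbatim from Nikolaev's 1983 paper (\cite{Nikolaev1983}), and the text surrounding it is purely a citation (``From \cite{Nikolaev1983}, we know that such metrics are locally $C^{1,\alpha}$''). So there is no in-paper argument to compare yours against; the theorem is used as a black box.

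As a reconstruction of Nikolaev's result, your scheme (smooth the metric, produce uniform Jost--Karcher harmonic charts for $g_\epsilon$, close an elliptic $W^{2,p}$ estimate via the bounded Ricci identity, pass to a weak limit) is a plausible modern route, but it inverts the logic of the source. Nikolaev works intrinsically on the Alexandrov space to produce harmonic coordinates first and derives the $W^{2,p}$ regularity directly; the existence of a smoothing with two-sided curvature bounds (what this paper records as Theorem~\ref{bdd}) is then a \emph{consequence}, not an input. You flag this danger yourself, but it is worth stating plainly: using a Nikolaev-type smoothing theorem to prove a Nikolaev-type regularity theorem is only non-circular if the smoothing is proved by a mechanism that does not invoke $W^{2,p}$ harmonic-coordinate regularity, which is precisely the delicate part you leave as a black box. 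Two smaller issues: (i) Cheeger's injectivity-radius lemma as usually stated requires a volume lower bound, not merely two-sided curvature bounds, so you need either a Cheeger--Gromov--Taylor-type local estimate or an argument that the Alexandrov assumption supplies the missing volume control; (ii) your limit argument establishes $W^{2,p}$ regularity in the limit chart $\Phi$, whereas the theorem asserts it in \emph{any} harmonic chart, so one must add the standard observation that transition maps between harmonic charts are $C^{2,\alpha}$ once $W^{2,p}$ regularity holds in one of them. Also, the passage to the limit is delicate because the charts $\Phi_{\epsilon_k}$ themselves vary; one should pull everything back to a fixed chart before extracting weak limits. None of these is fatal, but the write-up glosses over the genuinely hard Nikolaev smoothing input and the circularity concern it carries.
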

Thus, if we assume that $g\in C^0\bigcap W^{1,p}_{-q}$ with bounded curvature, then $g\in W^{2,p}_{loc}$.
From \cite{Simon2002}, we have the following theorem with the assumption of bounded curvature.
\begin{thm}\label{bdd}
  Let $g\in C^0$ be a metric with bounded curvature on a manifold $M$, with curvature $K(g)$
  \[ C'\leq K(g) \leq C \]
  in the sense of Aleksandrov. We may approximate $g$ by smooth Riemannian metrics $g_{\alpha}$, $\alpha\in\mathbb{N}$ such that
  \[ C'-\frac{1}{\alpha}\leq K(g_{\alpha})\leq C+\frac{1}{\alpha} ,\]
  \[ \lim_{\alpha\rightarrow\infty}|g_{\alpha}-g|_{C^{1,\beta}(\Omega)}\rightarrow0,  \]
  \[ \lim_{\alpha\rightarrow\infty}|g_{\alpha}-g|_{C^0(M)}\rightarrow0, \]
  for open $\Omega\subset M$ whose closure is compact. Furthermore if the curvature satisfies
  \[ B'g\leq Ricci(g)\leq Bg \]
  then
  \[ (B'-\frac{1}{\alpha})g_{\alpha}\leq Ricci(g_{\alpha})\leq (B+\frac{1}{\alpha})g_{\alpha}. \]
\end{thm}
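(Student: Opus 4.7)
The plan is to smooth $g$ by the Ricci--DeTurck flow introduced in Subsection 2.3, with a smooth background reference metric $\tilde g$. Although $g$ is only $C^0$ on $M$, Theorem \ref{loc} provides harmonic coordinates in which $g \in W^{2,p}_{\mathrm{loc}}$ for every $p \geq 1$, and hence $g \in C^{1,\beta}_{\mathrm{loc}}$ for every $\beta \in (0,1)$. This regularity suffices to set up the strictly parabolic Ricci--DeTurck system and obtain a family $g(t)$ on $(0,T) \times M$ that is smooth for $t > 0$ and satisfies $g(t) \to g$ as $t \to 0^+$ in $C^{1,\beta}$ on compact sets and in $C^0$ globally. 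The desired smoothings are then $g_\alpha := g(t_\alpha)$ for a sequence $t_\alpha \downarrow 0$ to be fixed later.

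For the curvature bounds, I would exploit the evolution equation of the Riemann tensor under Ricci--DeTurck, which is of the schematic form $\partial_t \mathrm{Rm} = \Delta \mathrm{Rm} + \mathrm{Rm} \ast \mathrm{Rm}$. Applying a scalar maximum principle to the sectional curvatures $K(g(t))$ of coordinate $2$-planes, one obtains an estimate of the form
\[
C' - C t \leq K(g(t)) \leq C + C t,
\]
so choosing $t_\alpha \leq 1/(C\alpha)$ produces the announced pinching with error $1/\alpha$. The Ricci bounds are handled analogously: apply Hamilton's tensor maximum principle to the symmetric $2$-tensors $\mathrm{Ric}(g(t)) - (B + 1/\alpha) g(t)$ and $(B' - 1/\alpha) g(t) - \mathrm{Ric}(g(t))$, and shrink $t_\alpha$ further so that the quadratic reaction term is absorbed by the $1/\alpha$ slack.

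The main obstacle is that at $t = 0$ the initial metric has curvature only in the Alexandrov (comparison-triangle) sense, so no classical curvature tensor is available at the initial time to feed into the maximum principle directly. I would resolve this by a two-step approximation: first mollify $g$ in harmonic coordinates to produce smooth metrics $g^{(\eta)}$ whose \emph{classical} sectional curvatures already satisfy $C' - o(1) \leq K(g^{(\eta)}) \leq C + o(1)$ as $\eta \to 0$, invoking stability of Alexandrov curvature bounds under $C^{1,\beta}$ convergence; then run Ricci--DeTurck starting from $g^{(\eta)}$ and extract $g_\alpha$ by a diagonal choice of $\eta$ and $t_\alpha$. Local $C^{1,\beta}$ convergence of $g_\alpha$ to $g$ then follows from interior parabolic regularity, and global $C^0$ convergence from the uniform $C^0$ closeness of the initial data together with a maximum principle applied to the scalar quantity $|g(t) - g^{(\eta)}|_{h}$.
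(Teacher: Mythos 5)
The paper does not prove this statement; it quotes it from Simon \cite{Simon2002}, whose argument rests on Nikolaev's harmonic-coordinate regularity theory (recorded above as Theorem \ref{loc}) together with a mollification scheme, not on the Ricci--DeTurck flow. Your proposal takes a genuinely different route, and the central step has a gap. You claim that applying a \emph{scalar} maximum principle to the sectional curvatures of coordinate $2$-planes gives $C' - Ct \le K(g(t)) \le C + Ct$. But the sectional curvature of a fixed $2$-plane does not obey a scalar reaction--diffusion equation: the reaction term in $\partial_t \mathrm{Rm} = \Delta\,\mathrm{Rm} + \mathrm{Rm}\ast\mathrm{Rm}$ couples all components of the curvature operator (the $\mathcal{R}^2+\mathcal{R}^{\#}$ structure), and a two-sided pinching of sectional curvature is not, in general dimension, an invariant convex set for the associated reaction ODE. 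The same objection applies to your Ricci step: $\mathrm{Ric}$ evolves by the Lichnerowicz Laplacian, whose reaction term involves full Riemann curvature and is not controlled by a function of $\mathrm{Ric}$ alone, so Hamilton's tensor maximum principle applied to $\mathrm{Ric}-(B+1/\alpha)g$ does not close by itself. What one can obtain for short time is an $O(t)$ drift of the full curvature operator, hence of $K$ and $\mathrm{Ric}$ --- but that already presupposes smooth initial data with \emph{classical} curvature bounds equal to the Alexandrov bounds, which $g$ does not have.

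Your two-step fallback is where the real work hides, and it is circular as written. Mollifying $g$ in harmonic coordinates to get smooth $g^{(\eta)}$ with classical curvature bounds $C'-o(1) \le K(g^{(\eta)}) \le C+o(1)$ and the analogous Ricci pinching is, word for word, the theorem to be proved; if you grant yourself that step, the Ricci--DeTurck flow contributes nothing and should be deleted. Moreover, ``stability of Alexandrov curvature bounds under $C^{1,\beta}$ convergence'' runs in the wrong direction: comparison-geometry bounds pass to $C^0$ limits, but that does not tell you that a mollification of $g$ has classical sectional curvature confined to $[C'-o(1),\,C+o(1)]$. What is actually needed is Nikolaev's theorem that a metric with two-sided Alexandrov curvature bounds has a $W^{2,p}_{\mathrm{loc}}$ harmonic-coordinate representative whose almost-everywhere-defined curvature tensor obeys the same pointwise bounds; only then does convolution (with careful treatment of the nonlinear terms $g^{-1}\ast\partial^2 g$ and $\Gamma\ast\Gamma$, which do not commute with mollification) yield the approximants $g_\alpha$ with the stated curvature pinching. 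Absent that input, neither branch of your argument closes.
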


Let $g_{\epsilon_i}$, $i\in\mathbb{N}$, $\epsilon_i\rightarrow0$ as $i\rightarrow\infty$ be the smooth Riemaiannian metrics in Theorem \ref{bdd} that approximate $g$.
We will omit $i$ for convenience. Then, $g_{\epsilon}\in W^{1,p}_{-q}$ for $\epsilon$ small and satisfy
\[\frac{1}{1+C_1(\epsilon)}g_{\epsilon ij}\leq g_{ij}\leq (1+C_1(\epsilon))g_{\epsilon ij}\]
 \[\frac{1}{1+C_1(\epsilon)}g^{ij}_{\epsilon}\leq g^{ij}\leq (1+C_1(\epsilon))g^{ij}_{\epsilon}\]
\[ \frac{1}{1+C_1(\epsilon)}\partial g_{\epsilon ij}\leq \partial g_{ij}\leq (1+C_1(\epsilon))\partial g_{\epsilon ij} \]
\[ \frac{1}{1+C_1(\epsilon)}\partial g^{ij}_{\epsilon}\leq \partial g^{ij}\leq (1+C_1(\epsilon))\partial g^{ij}_{\epsilon}. \]

Similar to the definition of the scalar curvature distribution, we can also define the Ricci curvature distribution.
\begin{definition}\label{Ricci}
  Let $(M,h)$ be a smooth Riemannian $n$-manifold with smooth background metric $h$. Given any metric $g\in L^{\infty}_{loc}\bigcap W^{1,2}_{loc}$ on $M$ with
  $g^{-1}\in L^{\infty}_{loc}$, for any compactly supported smooth function $u: M\rightarrow\mathbb{R}$, the Ricci curvature distribution $R_{ij}(g)$ is defined by
  \[ \ll R_{ij}(g),u\gg=\int_{M}(-\Gamma^k_{ij}\cdot \bar{\nabla}_k(u\frac{d\mu_g}{d\mu_h})+\Gamma^k_{ik}\bar{\nabla}_j(u\frac{d\mu_g}{d\mu_h}))d\mu_h+\int_MGud\mu_g \]
  where
  \[ G=R_{ij}(h)+\Gamma^k_{ij}\Gamma^l_{kl}-\Gamma^k_{il}\Gamma^l_{kj} \]
  \[ \Gamma^k_{ij}=\Gamma^k_{ij}(g)-\Gamma^k_{ij}(h) \]
  and $\bar{\nabla}$,$\bar{R}$ are the Levi-Civita connection and scalar curvature with respect to $h$, $\Gamma_{ij}^k(g)$ and $\bar{\Gamma}_{ij}^k$ denote the Christoffel symbols of $g$ and $h$ respectively, the dot product is taken using the metric $h$, and $\text{d}\mu_h$ and $\text{d}\mu_g$ denote the volume measures associated with $h$ and $g$, respectively.
\end{definition}
We then define the weighted Lebesgue spaces $L^p_{\delta}$ in the distributional sense.
\begin{definition}
  The distributional Ricci curvature $\ll R_{ij}(g),u\gg$ is said to be $L^p_{\delta}$ if its norm
  \begin{equation}\label{Ric}
  \begin{split}
     \ll R_{ij}(g),u \gg_{L^p_{\delta}}= & \int_M |R_{ij}(h)|^p r^{-\delta p-n}ud\mu_g
     +\int_M(|\Gamma^k_{ij}\Gamma^l_{kl}|^{\frac{p}{2}}+|\Gamma^k_{il}\Gamma^l_{kj}|^{\frac{p}{2}})r^{(-\delta-1)p-n}ud\mu_{g} \\
       & + \int_M|\Gamma^k_{ij}|^pr^{(-\delta-1)p-n}|\bar{\nabla}_k(u\frac{d\mu_g}{d\mu_h})|d\mu_h
       + \int_M|\Gamma^k_{ik}|^pr^{(-\delta-1)p-n}|\bar{\nabla}_j(u\frac{d\mu_g}{d\mu_h})|d\mu_h
  \end{split}
  \end{equation}
  is finite for any compactly supported smooth function $u$ on $M$.
\end{definition}

\begin{prop}\label{u}
  Let $M$ be a smooth manifold with a smooth background metric $h$. Given any Riemannian metric $g\in C^0\bigcap W^{1,p}_{loc}$, then the scalar curvature distribution $R_g$ in the sense of Definition \ref{scalar} and the Ricci curvature distribution $R_{ij}(g)$ in the sense of Definition \ref{Ricci} make sense for all compactly supported functions $u\in L^{\frac{p}{p-2}}$ with its derivatives lie in $L^{\frac{p}{p-1}}$.
\end{prop}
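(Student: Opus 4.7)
The plan is to show that every integrand appearing in the formulas of Definitions \ref{scalar} and \ref{Ricci} is integrable under the stated hypotheses on $u$, by decomposing each term into a product of factors whose exponents are H\"older-conjugate. Compact support of $u$ reduces the question to a fixed compact set $K \subset M$ on which all the structural constants are uniformly controlled.

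First I would record the regularity of the ingredients. Because $g \in C^0$ is positive definite, its inverse $g^{-1}$ lies in $L^{\infty}_{loc}$, and $g \in W^{1,p}_{loc}$ gives $\bar{\nabla} g \in L^p_{loc}$. Consequently, the difference of Christoffel symbols $\Gamma^k_{ij} = \Gamma^k_{ij}(g) - \bar{\Gamma}^k_{ij}$ and the vector field $V^k = g^{ij}g^{kl}(\bar{\nabla}_j g_{il} - \bar{\nabla}_l g_{ij})$ both belong to $L^p_{loc}$. The density $d\mu_g/d\mu_h = \sqrt{\det g / \det h}$ is continuous, bounded, and bounded away from zero on $K$, and its gradient is a continuous function of $g$ multiplied by $\bar{\nabla} g$, so $\bar{\nabla}(d\mu_g/d\mu_h) \in L^p_{loc}$. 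Each of the quadratic-in-$\bar{\nabla} g$ expressions $\Gamma^k_{kl}\Gamma^l_{ij}$, $\Gamma^k_{jl}\Gamma^l_{ik}$, and the corresponding terms inside $F$ and $G$ lies in $L^{p/2}_{loc}$ by Cauchy--Schwarz.

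Next I would apply H\"older's inequality term by term. Expanding
\[
\bar{\nabla}\!\left(u\,\tfrac{d\mu_g}{d\mu_h}\right) = (\bar{\nabla} u)\,\tfrac{d\mu_g}{d\mu_h} + u\,\bar{\nabla}\!\left(\tfrac{d\mu_g}{d\mu_h}\right)
\]
splits the first integral in $\ll R_g, u \gg$ into two pieces. The first is controlled by pairing $V \in L^p_{loc}$ against $\bar{\nabla} u \in L^{p/(p-1)}$ via the conjugate pair $(p, p/(p-1))$. The second is controlled by pairing $V \cdot \bar{\nabla}(d\mu_g/d\mu_h) \in L^{p/2}_{loc}$ against $u \in L^{p/(p-2)}$ via the pair $(p/2, p/(p-2))$. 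The zeroth-order piece $\int_M F u\,(d\mu_g/d\mu_h)\,d\mu_h$ is treated the same way: the background scalar curvature $\bar{R}$ is bounded on $K$, and every remaining summand of $F$ lies in $L^{p/2}_{loc}$, so compactly supported $u \in L^{p/(p-2)}$ suffices. The same two H\"older pairings handle every summand in the defining formula for $\ll R_{ij}(g), u \gg$, with $V$ replaced by the relevant components of $\Gamma^k_{ij}$ and $F$ replaced by $G$.

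The only potential obstacle is bookkeeping: verifying that every summand in the expanded integrands for $V$, $F$, and $G$ falls into one of the two H\"older pairings $(p, p/(p-1))$ or $(p/2, p/(p-2))$, and that no higher-order product of $\bar{\nabla} g$ appears. Once this is confirmed by inspection of the explicit formulas in Definitions \ref{scalar} and \ref{Ricci}, the resulting estimates show that the distributions extend continuously to the space of compactly supported $u$ with the prescribed integrability on $u$ and $\bar{\nabla} u$.
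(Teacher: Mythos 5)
The paper states Proposition~\ref{u} without giving a proof; it is presented as a direct generalization of the corresponding $p=n$ statement in \cite{Lee2015} (their Lemma~2.2), where the same claim is proved for $g\in C^0\cap W^{1,n}_{loc}$ and $u\in L^{n/(n-2)}$, $\bar\nabla u\in L^{n/(n-1)}$. Your argument is correct and is essentially that proof with $n$ replaced by $p$: localize to a compact set containing $\operatorname{supp} u$, observe that $g^{-1}$ and $d\mu_g/d\mu_h$ are bounded and bounded away from zero there, that $\Gamma$, $V$, and $\bar\nabla(d\mu_g/d\mu_h)$ lie in $L^p_{loc}$, and that $F$ and $G$ are a bounded term plus $L^{p/2}_{loc}$ terms, then conclude by the two H\"older pairings $(p,\,p/(p-1))$ and $(p/2,\,p/(p-2))$. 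One small point worth making explicit, which is implicit in the paper's standing hypothesis $p>n\geq 3$: the exponents $p/(p-1)$ and $p/(p-2)$ are only meaningful for $p>2$, so the statement tacitly assumes $p>2$. With that caveat noted, your bookkeeping is complete and the proposal fills the gap the paper leaves open.
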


\def\Rnum#1{\uppercase\expandafter{\romannumeral #1}}
\begin{prop}\label{Rij}
  If $\ll R_{ij}(g),u\gg\in L^p_{-q-2}$, then $R_{ij}(g_{\epsilon})\in L^p_{-q-2}$.
\end{prop}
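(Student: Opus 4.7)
The plan is to bound each of the four integrals in the norm (\ref{Ric}) with $g$ replaced by $g_\epsilon$ by a constant depending on $\epsilon$ times the corresponding integral for $g$, and then apply the hypothesis. Since $g_\epsilon$ is smooth, the distributional pairing $\ll R_{ij}(g_\epsilon),u\gg$ in Definition \ref{Ricci} reduces by integration by parts to the classical $\int_M R_{ij}(g_\epsilon)u\,d\mu_{g_\epsilon}$; consequently the statement $R_{ij}(g_\epsilon)\in L^p_{-q-2}$ amounts to showing that, with $g$ replaced by $g_\epsilon$ throughout, each of the four integrals in (\ref{Ric}) is finite for every compactly supported smooth test function $u$.

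For each such $u$ I would invoke the four pointwise comparison estimates displayed immediately after Theorem \ref{bdd}. Setting $C(\epsilon):=(1+C_1(\epsilon))^m$ for a suitable exponent $m$, these imply
\[ |\Gamma^k_{ij}(g_\epsilon)-\bar\Gamma^k_{ij}|\leq C(\epsilon)\,|\Gamma^k_{ij}(g)-\bar\Gamma^k_{ij}|, \qquad d\mu_{g_\epsilon}/d\mu_h \leq C(\epsilon)\,d\mu_g/d\mu_h, \]
together with a derivative-level bound $|\bar\nabla_k(d\mu_{g_\epsilon}/d\mu_h)|\leq C(\epsilon)\,|\bar\nabla_k(d\mu_g/d\mu_h)|$, obtained by differentiating the identity $d\mu/d\mu_h=(\det g/\det h)^{1/2}$ and applying the pointwise comparisons to each factor of the resulting expression $\tfrac12(d\mu/d\mu_h)\,g^{ij}\bar\nabla_k g_{ij}$.

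Substituting into (\ref{Ric}), the first integral (involving $|R_{ij}(h)|^p$) is trivially finite because $h$ is smooth and Euclidean outside the compact set $K$; the second is dominated directly by the $\Gamma$-comparison and the volume-ratio comparison; for the third and fourth, which carry $|\bar\nabla_k(u\,d\mu_{g_\epsilon}/d\mu_h)|$, I would expand by Leibniz as $(\bar\nabla_k u)(d\mu_{g_\epsilon}/d\mu_h)+u\,\bar\nabla_k(d\mu_{g_\epsilon}/d\mu_h)$ and estimate each summand via the three displayed comparisons. Summing the four estimates yields $\ll R_{ij}(g_\epsilon),u\gg_{L^p_{-q-2}}\leq C(\epsilon)^{m'}\ll R_{ij}(g),u\gg_{L^p_{-q-2}}$, which is finite by hypothesis.

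The main obstacle is the third comparison above: the transfer of the derivative of the volume-form ratio from $g$ to $g_\epsilon$. This requires applying the inequality $\partial g_{ij}\leq (1+C_1(\epsilon))\partial g_{\epsilon\,ij}$ factor by factor inside the expression $g^{ij}\bar\nabla_k g_{ij}$. That manipulation is legitimate because the bounded-curvature hypothesis (i) together with Theorem \ref{loc} places $g$ in $W^{2,p}_{loc}$, so $\bar\nabla g_{ij}$ is an actual (locally $L^p$) function and the pointwise comparisons hold almost everywhere; the constants $C(\epsilon)$ may blow up as $\epsilon\to 0$, but that is harmless since only finiteness for fixed $\epsilon$ is claimed here.
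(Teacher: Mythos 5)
Your approach takes a genuinely different route from the paper's, and it contains two serious gaps.

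First, the reformulation at the outset is already off target. Replacing $g$ by $g_\epsilon$ in the four integrals of \eqref{Ric} while keeping $h$ fixed produces a collection of integrals none of which involves $R_{ij}(g_\epsilon)$ at all: the first integral still carries $|R_{ij}(h)|^p$, and the remaining three carry only difference Christoffel symbols. Showing those four integrals are finite therefore does \emph{not} establish that $R_{ij}(g_\epsilon)\in L^p_{-q-2}$ in the classical sense, which is what the Proposition actually asserts (and what Theorem \ref{reg} needs). You observed yourself that the first integral ``is trivially finite because $h$ is smooth'' --- which is precisely the sign that you never get your hands on $R_{ij}(g_\epsilon)$.

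Second, the pointwise comparison
\[
|\Gamma^k_{ij}(g_\epsilon)-\bar\Gamma^k_{ij}|\leq C(\epsilon)\,|\Gamma^k_{ij}(g)-\bar\Gamma^k_{ij}|
\]
does not follow from the componentwise inequalities stated after Theorem \ref{bdd}, and is in fact false in general. The difference Christoffel symbol $\Gamma^k_{ij}(g)-\bar\Gamma^k_{ij}=\tfrac12 g^{kl}(\bar\nabla_i g_{jl}+\bar\nabla_j g_{il}-\bar\nabla_l g_{ij})$ is a \emph{signed} linear combination of derivatives and can vanish by cancellation at a point where the individual entries $\bar\nabla g$ do not. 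After perturbing $g$ to $g_\epsilon$ the cancellation is generically destroyed, so the left side is strictly positive where the right side is zero; no choice of $C(\epsilon)$ repairs this. The same objection applies to your derivative-level bound for $\bar\nabla_k(d\mu_{g_\epsilon}/d\mu_h)$ versus $\bar\nabla_k(d\mu_g/d\mu_h)$, since $\tfrac12 (d\mu/d\mu_h)\,g^{ij}\bar\nabla_k g_{ij}$ is again a signed contraction.

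The paper's proof avoids both problems with a single device: instead of comparing the two quantities term by term, it \emph{changes the background metric}, taking $h=g_\epsilon$ and the test function $u=v_\rho\,d\mu_{g_\epsilon}/d\mu_g$ in \eqref{Ric}. With this choice $u\,d\mu_g/d\mu_h=v_\rho$, the first integral becomes exactly $\int_M |R_{ij}(g_\epsilon)|^p\, r^{(q+2)p-n}\,v_\rho\,d\mu_{g_\epsilon}$ (the quantity one wants to control), and the difference Christoffel symbols in the remaining three integrals become $\Gamma^k_{ij}(g)-\Gamma^k_{ij}(g_\epsilon)$, which lie in $L^p_{-q-1}$ because $g,g_\epsilon\in W^{1,p}_{-q}$. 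The hypothesis gives finiteness of the sum of all four terms, terms II--IV are bounded directly, and term I then yields $R_{ij}(g_\epsilon)\in L^p_{-q-2}$. The background-metric freedom of the distributional definition is the essential ingredient here; it cannot be replaced by a pointwise multiplicative comparison of Christoffel symbols.
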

\begin{proof}
Given any $\rho>2$, consider the cut-off function
\begin{equation}\label{cutoff}
  v_{\rho}(x)=\begin{cases}
                1, & r(x)\leq\rho \\
                2-\frac{r}{\rho}, & \rho<r(x)\leq2\rho \\
                0, & r(x)>2\rho.
              \end{cases}
\end{equation}
Since $v_{\rho}(x)$ is compactly supported Lipschitz continuous function,
by Proposition \ref{u}, let $u=v_{\rho}\frac{d\mu_{g_{\epsilon}}}{d\mu_g}$,$h=g_{\epsilon}$, $\delta=-q-2$ in \eqref{Ric}, we have
  \begin{equation*}
    \begin{split}
       \ll R_{ij}(g),u\gg_{L^p_{-q-2}}= &\int_M |R_{ij}(g_{\epsilon})|^p r^{(q+2)p-n}v_{\rho}d\mu_{g_{\epsilon}}
     +\int_M(|\Gamma^k_{ij}\Gamma^l_{kl}|^{\frac{p}{2}}+|\Gamma^k_{il}\Gamma^l_{kj}|^{\frac{p}{2}})r^{(q+1) p-n}v_{\rho}d\mu_{g_{\epsilon}} \\
       & + \int_M|\Gamma^k_{ij}|^pr^{(q+1)p-n}|\bar{\nabla}_kv_{\rho}|d\mu_{g_{\epsilon}}
       + \int_M|\Gamma^k_{ik}|^pr^{(q+1)p-n}|\bar{\nabla}_jv_{\rho}|d\mu_{g_{\epsilon}}  \\
        = &\Rnum{1}+\Rnum{2}+\Rnum{3}+\Rnum{4} <\infty.
    \end{split}
  \end{equation*}
Since $g\in W^{1,p}_{-q}$ and $g_{\epsilon}\in W^{1,p}_{-q}$, we have $\Gamma_{ij}^k\in L^p_{-q-1}$ and $\Gamma_{ij}^k\Gamma_{kl}^l\in L^{\frac{p}{2}}_{-2q-2}$.
By the fact that $-2q-2<-n$, then $L^{\frac{p}{2}}_{-2q-2}\subset L^1_{-n}=L^1$, we have $\Rnum{2}<\infty$.
\begin{equation*}
  \begin{split}
      \Rnum{3}= & \int_{\{ \rho<r<2\rho \}}|\Gamma^k_{ij}|^pr^{(q+1)p-n}\frac{1}{\rho}|\bar{\nabla r}|d\mu_{g_{\epsilon}} \\
       =& \frac{1}{\rho}\int_{\{ \rho<r<2\rho \}}|\Gamma^k_{ij}|^pr^{(q+1)p-n}d\mu_{g_{\epsilon}}\\
       <&\frac{1}{2}\|\Gamma^k_{ij}\|_{L^p_{-q-1}}<\infty
  \end{split}
\end{equation*}
for $\rho>2$.
And so $\Rnum{4}<\infty$. Thus we have $R_{ij}(g_{\epsilon})\in L^p_{-q-2}$.
\end{proof}

We will need the following theorem in \cite{Bartnik1986}.
\begin{thm}\label{reg}
  Suppose $(M,g,\Phi)$ is a structure of infinity with $\Phi_{\ast}g-\delta\in W^{2,p}_{-\eta}(\mathbb{R}^n\backslash B_R)$ for some $\eta>0$, $q>0$, $R\geq 1$,
  the Ricci tensor of $(M,g)$ satisfies
  \[ Ric(g)\in L^p_{-\tau-2}(M) \]
  for some nonexceptional $\tau >\eta$. Then there is a structure of infinity $\Theta$ defined by coordinates harmonic near infinity which satisfies
  $(\Theta_{\ast}g-\delta)\in W^{2,p}_{-\tau}(\mathbb{R}^n\backslash B_{R_1})$, for some $R_1\geq R$.
\end{thm}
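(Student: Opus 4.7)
The plan is to carry out the standard Bartnik bootstrap in harmonic coordinates at infinity. First I would construct $g$-harmonic coordinates $y^k$ near infinity by writing $y^k = x^k + z^k$ and solving
\begin{equation*}
  \Delta_g z^k = g^{ij}\Gamma^k_{ij}(g),
\end{equation*}
whose right-hand side lies in $L^p_{-\eta-1}$ since $\partial g \in L^p_{-\eta-1}$. Applying the weighted Laplacian isomorphism theorem (choosing a nonexceptional weight close to $1-\eta$), one obtains $z^k \in W^{2,p}_{1-\eta}$, which is subleading relative to $x^k$ and hence defines a valid new structure of infinity $\Theta$. A direct change-of-variables check shows $\Theta_{\ast} g - \delta$ still lies in $W^{2,p}_{-\eta}(\mathbb{R}^n \backslash B_{R_1})$ for some $R_1 \geq R$.

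The decisive feature of harmonic coordinates is the identity
\begin{equation*}
  R_{ij} = -\tfrac{1}{2} g^{kl} \partial_k \partial_l g_{ij} + Q_{ij}(g, \partial g),
\end{equation*}
where $Q_{ij}$ is polynomial in $g$, $g^{-1}$ and quadratic in $\partial g$. Rearranging gives a quasilinear elliptic equation for the decaying part of the metric whose principal part is asymptotically the flat Laplacian:
\begin{equation*}
  -\tfrac{1}{2} g^{kl} \partial_k \partial_l (g_{ij} - \delta_{ij}) = R_{ij} - Q_{ij}(g, \partial g).
\end{equation*}

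I would then bootstrap the weighted regularity. From $g - \delta \in W^{2,p}_{-\eta}$ we get $\partial g \in L^p_{-\eta-1}$, so by the multiplication properties of weighted $L^p$ spaces $Q_{ij} \in L^{p/2}_{-2\eta-2}$, which embeds into $L^p_{-\tau_1-2}$ for a suitable $\tau_1 \in (\eta, \min(2\eta, \tau)]$. Combined with the hypothesis $R_{ij} \in L^p_{-\tau-2}$, the right-hand side of the displayed PDE lies in $L^p_{-\tau_1-2}$. Weighted elliptic regularity for the Laplacian (choosing $\tau_1$ nonexceptional) upgrades $g - \delta$ to $W^{2,p}_{-\tau_1}$. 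Iterating this step finitely many times, each iteration roughly doubling the decay of the $Q_{ij}$ contribution, one eventually reaches $W^{2,p}_{-\tau}$.

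The main obstacle is the weight bookkeeping in the iteration: at each step one must select an intermediate weight $\tau_k$ that is simultaneously nonexceptional for the weighted Laplacian isomorphism, strictly better than the previous weight, and dominated by both the Ricci decay $-\tau$ and $-2\tau_{k-1}$ coming from the quadratic term $Q_{ij}$. Ensuring that the chain terminates precisely at the value $-\tau$ (rather than stalling just below it) requires a final application in which the $Q_{ij}$ contribution has already been made negligible compared to $R_{ij}$, and a perturbative argument exploiting that the coefficients $g^{kl}$ of the elliptic operator approach $\delta^{kl}$ at infinity, so that Bartnik's isomorphism for the flat Laplacian can be applied after a small compact-support correction.
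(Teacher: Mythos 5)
The paper does not prove Theorem~\ref{reg}; it is quoted directly from Bartnik \cite{Bartnik1986}, so there is no internal proof to compare against. Your outline follows the standard Bartnik bootstrap, which is indeed the argument behind the cited result, but two of the steps as you have written them are not correct.

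First, the assertion that $Q_{ij}\in L^{p/2}_{-2\eta-2}$ ``embeds into $L^{p}_{-\tau_1-2}$'' is false: there is no embedding of a weighted $L^{p/2}$ space into a weighted $L^p$ space, since such an inclusion would have to raise the Lebesgue exponent. What rescues the step is the hypothesis $p>n$ (implicit in the statement of the theorem and explicit in Bartnik), which gives $W^{1,p}_{-\eta-1}\hookrightarrow C^{0,\alpha}_{-\eta-1}$; pairing one pointwise factor $|\partial g|\lesssim r^{-\eta-1}$ with one $L^p_{-\eta-1}$ factor places the quadratic expression $Q_{ij}$ directly in $L^p_{-2\eta-2}$, with no change of Lebesgue exponent. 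Second, solving $\Delta_g z^k = g^{ij}\Gamma^k_{ij}$ with right-hand side in $W^{1,p}_{-\eta-1}$ and $W^{2,p}$ coefficients yields $z^k\in W^{3,p}_{1-\eta}$, not merely $W^{2,p}_{1-\eta}$ as you wrote, and the extra derivative is not optional: the pushed-forward metric $\Theta_\ast g$ has second derivatives involving $\partial^3 z$, so with only $z\in W^{2,p}$ one could not even conclude $\Theta_\ast g-\delta\in W^{2,p}_{-\eta}$ and the bootstrap would not start. With those two repairs, the iteration and the weight bookkeeping you describe (choosing nonexceptional intermediate weights below both $-2\tau_{k-1}$ and $-\tau$ and freezing coefficients to reduce to the flat Laplacian) is the content of Bartnik's argument.
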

Note that $\delta\in\mathbb{R}$ is said to be nonexceptional if $\delta\in\mathbb{R}/\{ k\in\mathbb{Z},k\neq -1,-2,\cdots, 3-n \}$, where the exceptional values
$\{ k\in\mathbb{Z},k\neq -1,-2,\cdots, 3-n \}$ correspond to the orders of growth of harmonic functions in $\mathbb{R}^n\backslash B_1$.

\begin{thm}\label{gepsilon}
  Let $(M,g)$ be an asymptotically flat $n$-manifold $(3\leq n\leq 7)$ with $g\in C^0\bigcap W^{1,p}_{-q}$, $p>n$, $q>\frac{n-2}{2}$. Assume that $g$ has bounded curvature and its distributional Ricci curvature $\ll R_{ij}(g),u \gg\in L^p_{-q-2}$, then there is a series of smoothings $g_{\epsilon}$ such that $g_{\epsilon}\rightarrow g$ in $C^{1,\alpha}_{loc}$ as $\epsilon\rightarrow0$ and $g_{\epsilon}\in W^{2,p}_{-q}$.
\end{thm}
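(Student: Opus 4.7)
The strategy is to combine Simon's smoothing (Theorem \ref{bdd}) with Bartnik's weighted elliptic regularity (Theorem \ref{reg}), supplying the latter with the Ricci decay furnished by Proposition \ref{Rij}. First, I apply Theorem \ref{bdd} to produce smooth Riemannian metrics $g_\epsilon$ of bounded curvature such that $g_\epsilon \to g$ uniformly on $M$ and in $C^{1,\beta}$ on every compactly contained open set. Concretely this smoothing is realized by convolution in local harmonic charts (available by Theorem \ref{loc}); outside the compact set $K$ one may run the construction in the Euclidean coordinates at infinity, so that the $W^{1,p}_{-q}$ decay of $g - \delta$ is preserved by the mollification. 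Combined with the two-sided pointwise comparability of $g, g_\epsilon$ and their first derivatives recorded after Theorem \ref{bdd}, this gives $g_\epsilon \in W^{1,p}_{-q}$ for every small $\epsilon$, and in particular $g_\epsilon \to g$ in $C^{1,\alpha}_{loc}$ for any $\alpha \le \beta$, which is the first conclusion.

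For the second conclusion I first invoke Proposition \ref{Rij} to obtain $Ric(g_\epsilon) \in L^p_{-q-2}$, in the classical sense since $g_\epsilon$ is smooth. To upgrade from $W^{1,p}_{-q}$ to $W^{2,p}_{-q}$ I apply Theorem \ref{reg}. Its input hypothesis demands $\Phi_\ast g_\epsilon - \delta \in W^{2,p}_{-\eta}$ for some $\eta > 0$. This initial decay I obtain from standard interior Calderon--Zygmund/Schauder estimates applied dyadically on annuli $\{2^k < r < 2^{k+1}\}$ in harmonic coordinates: the smooth metric $g_\epsilon$ has bounded curvature and $g_\epsilon - \delta = o(1)$ at infinity, so after rescaling each annulus to unit size the second derivative estimate in terms of the Ricci bound, combined with the weighted scaling of the norms, produces a positive (if small) decay rate $\eta$. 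Feeding this structure at infinity together with the $L^p_{-q-2}$ control on $Ric(g_\epsilon)$ into Theorem \ref{reg} with $\tau = q$ yields coordinates harmonic near infinity in which $g_\epsilon - \delta \in W^{2,p}_{-q}$, hence $g_\epsilon \in W^{2,p}_{-q}$ as required. In the exceptional case where $q$ is an integer of the forbidden type, one iterates with nonexceptional values approaching $q$, using that $q > (n-2)/2$ leaves room on either side.

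The main obstacle is this last step. Although $g_\epsilon$ is smooth, the Simon smoothing carries no a priori weighted control on $\partial^2 g_\epsilon$ at infinity; one genuinely has to bootstrap via the Ricci tensor and harmonic coordinates rather than read off the second-derivative decay for free. A secondary subtlety, worth flagging, is that the smoothing procedure must respect the asymptotically flat structure globally so that the same $\Phi$ serves as a chart at infinity for both $g$ and every $g_\epsilon$; this is handled by mollifying in the Euclidean coordinates at infinity, where the mollification commutes with the $W^{1,p}_{-q}$ decay.
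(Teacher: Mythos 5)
Your proposal follows the same skeleton as the paper: smooth $g$ via Theorem~\ref{bdd}, record the two-sided comparability to land $g_\epsilon\in W^{1,p}_{-q}$, feed $\ll R_{ij}(g),u\gg\in L^p_{-q-2}$ through Proposition~\ref{Rij} to get $Ric(g_\epsilon)\in L^p_{-q-2}$, and finish by Bartnik's harmonic-coordinate regularity Theorem~\ref{reg}. The genuine divergence is in the one step you (correctly) flag as the obstacle: producing the preliminary decay $\Phi_\ast g_\epsilon-\delta\in W^{2,p}_{-\eta}$ that Theorem~\ref{reg} requires as input. The paper does not run a dyadic Calder\'on--Zygmund iteration; it invokes Theorem~\ref{loc} to get $g,g_\epsilon\in W^{2,p}_{loc}$, then uses the observation that the weight in $L^p_{\delta}$ becomes trivial at $\delta=-n/p$ to claim $g_\epsilon\in W^{2,p}_{-n/p}$, and splits into two cases: if $n/p<q$ it applies Theorem~\ref{reg} with $\eta=n/p$, $\tau=q$; if $n/p>q$ (possible only for $n=3$) it skips Theorem~\ref{reg} altogether and notes $\partial^2 g_\epsilon\in L^p_{-n/p-2}\subset L^p_{-q-2}$ directly. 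Your approach buys a more explicit mechanism for the preliminary decay but introduces its own subtlety: under the rescaling $\tilde g(x)=g_\epsilon(2^k x)$ the pointwise curvature bound degrades by a factor $2^{2k}$, so the ``bounded curvature'' input is not what makes the dyadic elliptic estimate close; what scales correctly is the $L^p_{-q-2}$ control on $Ric$, and you would need to make the estimate purely in those terms. Conversely, the paper's assertion $g_\epsilon\in W^{2,p}_{-n/p}$ is itself terse --- $W^{2,p}_{loc}$ together with $g_\epsilon\in W^{1,p}_{-q}$ does not by itself supply a global weighted $L^p$ bound on $\partial^2 g_\epsilon$ --- so neither treatment of this step is fully spelled out. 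Your handling of the exceptional values of $q$ is a point the paper leaves implicit, and you omit the paper's $n/p>q$ branch, but these are minor; the routes are essentially the same up to the preliminary-decay step.
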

\begin{proof}
  Let $g_{\epsilon}$ as above, then $g_{\epsilon}\rightarrow g$ in $W^{1,p}_{loc}$ as $\epsilon\rightarrow0$ and thus $g_{\epsilon}\in W^{1,p}_{-q}$ as long as $\epsilon$ small. From Theorem \ref{bdd}, we know $g_{\epsilon}\rightarrow g$ in $C^{1,\alpha}_{loc}$ as $\epsilon\rightarrow0$.
  We can apply Theorem \ref{loc} to get $g\in W^{2,p}_{loc}$, so $g_{\epsilon}\in W^{2,p}_{loc}$.
  By the definition of the weighted Sobolev spaces, it is easy to see that $L^p_{\delta}$ is equivalent to the usual Sobolev space when $\delta=-\frac{n}{p}$.
  Therefore, $g_{\epsilon}\in W^{2,p}_{-\frac{n}{p}}$.
  Proposition \ref{Rij} implies that $R_{ij}(g_{\epsilon})\in L^p_{-q-2}$.
  If $\frac{n}{p}<q$, since $\frac{n}{p}>0$, we have $g_{\epsilon}\in W^{2,p}_{-q}$ by Theorem \ref{reg}.
  If $\frac{n}{p}>q$, then $\partial\partial g_{\epsilon}\in L^p_{-\frac{n}{p}-2}\subset L^p_{-q-2}$. Thus, we also get $g_{\epsilon}\in W^{2,p}_{-q}$.
\end{proof}

\section{The change of the scalar curvature}

We now focus on the scalar curvature. Under the hypothesis of Theorem \ref{gepsilon}, let $g_{\epsilon}$ be the smoothings constructed in the previous section.
From the definition of the scalar curvature distribution, we assume
\[ \ll R_g,u \gg\geq 0  \]
for any smooth compactly supported nonnegative function $u$. We choose $g_{\epsilon}$ as the background metric and
\[ u=v\frac{d \mu_{g_{\epsilon}}}{d \mu_g} \]
where $v$ is a smooth compactly supported nonnegative function on $M$. Then according to Definition \ref{scalar},
\begin{equation}\label{positive scalar}
  \begin{split}
     \ll R_g,u \gg =& \int_{M}(-(V\cdot \bar{\nabla}(u\frac{d \mu_g}{d \mu_{g_{\epsilon}}}))+Fu\frac{d \mu_g}{d \mu_{g_{\epsilon}}})d\mu_{g_{\epsilon}} \\
      = &\int_{M} (-g^{ij}g^{kl}(\bar{\nabla}_j g_{il}-\bar{\nabla}_l g_{ij})\bar{\nabla}_k v\\
      &+(R_{\epsilon}-\bar{\nabla}_k g^{ij}\Gamma^k_{ij}+\bar{\nabla}_k g^{ik}\Gamma^j_{ji}+
      g^{ij}(\Gamma^k_{kl}\Gamma^l_{ij}-\Gamma^k_{jl}\Gamma^l_{ik}))v)d\mu_{g_{\epsilon}}\\
      \geq &0
  \end{split}
\end{equation}
where $R_{\epsilon}$ and $\bar{\nabla}$ are the scalar curvature and Levi-Civita connection of $g_{\epsilon}$.
then
\begin{equation}\label{scalar curvature}
  \begin{split}
     \int_M R_{\epsilon}v d\mu_{g_{\epsilon}} & \geq\int_M g^{ij}g^{kl}(\bar{\nabla}_j g_{il}-\bar{\nabla}_l g_{ij})\bar{\nabla}_k v d\mu_{g_{\epsilon}}  \\
       & +\int_M (\bar{\nabla}_k g^{ij}\Gamma^k_{ij}-\bar{\nabla}_k g^{ik}\Gamma^j_{ji}-g^{ij}(\Gamma^k_{kl}\Gamma^l_{ij}-\Gamma^k_{jl}\Gamma^l_{ik}))vd\mu_{g_{\epsilon}}.
  \end{split}
\end{equation}

Let $g\in C^0\bigcap W^{1,p}_{-q}$, $p>n$, $q>\frac{n-2}{2}$, so $g\in C^0\bigcap W^{1,p}_{loc}$,for $p>n$,then by Proposition \ref{u}, the distributional curvature $\ll R_g,u\gg$ makes sense for all compactly supported functions $u\in L^{\frac{p}{p-2}}$ whose derivatives lie in $L^{\frac{p}{p-1}}$.

Now we assume $g$ has bounded curvature in the sense of Aleksandrov and the set $\{ x\in M:Rm(g)(x)\leq0 \}$ is compact, where $Rm(g)(x)$ is the curvature of $g$.
Theorem \ref{bdd} implies that the set $\{ x\in M:Rm(g_{\epsilon})(x)\leq0 \}$ is also compact and so is $\{ x\in M:R_{\epsilon}(x)\leq0 \}$. Let $U=\{ x\in M:R_{\epsilon}(x)\leq 0 \}$, $U_{\delta}=\{ x\in M: dist(x,U)\leq\delta \}$, $l=dist(x,U)$,
\[ v= \begin{cases}
        1, & \mbox{in } U, \\
        1-\frac{l(x)}{\delta}, &\mbox{in } U_{\delta}\backslash U\\
        0, & \mbox{outside of } U_{\delta}.
      \end{cases} \]
From \eqref{scalar curvature}, we have
\begin{equation}\label{R1}
  \int_{U_{\delta}}R_{\epsilon}vd\mu_{g_{\epsilon}}\geq \int_{U_{\delta}}(\bar{\nabla}_k g^{ij}\Gamma^k_{ij}-\bar{\nabla}_k g^{ik}\Gamma^j_{ji}-g^{ij}(\Gamma^k_{kl}\Gamma^l_{ij}-\Gamma^k_{jl}\Gamma^l_{ik}))vd\mu_{g_{\epsilon}}+\int_{U_{\delta}\backslash U}V\cdot\bar{\nabla}vd\mu_{g_{\epsilon}}
=\Rnum{1}+\Rnum{2}
\end{equation}
and
\[ \int_{\{R_{\epsilon}<0\}}R_{\epsilon}d\mu_{g_{\epsilon}}<0 \]
By direct calculation, we get
\begin{equation*}
  \begin{split}
     |\Gamma_{ij}^k|= &|\Gamma_{ij}^k(g)-\Gamma_{ij}^k(g_{\epsilon})|  \\
      = & |\frac{1}{2}g^{kl}(g_{il,j}+g_{jl,i}-g_{ij,l})-\frac{1}{2}g_{\epsilon}^{kl}(g_{\epsilon il,j}+g_{\epsilon jl,i}-g_{\epsilon ij,l})| \\
      = & |\frac{1}{2}g^{kl}(g_{il,j}-g_{\epsilon il,j})+\frac{1}{2}(g^{kl}-g^{kl}_{\epsilon})g_{\epsilon il,j}+\frac{1}{2}g^{kl}(g_{jl,i}-g_{\epsilon jl,i}) \\
       & +\frac{1}{2}(g^{kl}-g^{kl}_{\epsilon})g_{\epsilon jl,i}+\frac{1}{2}(g_{\epsilon}^{kl}-g^{kl})g_{\epsilon ij,l}+\frac{1}{2}g^{kl}(g_{\epsilon ij,l}-g_{ij,l})|\\
      \leq &C_1(\epsilon)|g_{\epsilon}*\partial g_{\epsilon}|
  \end{split}
\end{equation*}
and
\[|\Gamma^j_{kl}(g_{\epsilon})\Gamma^k_{ij}|\leq C_1(\epsilon)Q(g^{-1}_{\epsilon},\partial g_{\epsilon})\]
where $Q$ is a quadratic form of $g^{-1}_{\epsilon}$ and $\partial g_{\epsilon}$. Thus

\begin{equation*}
\begin{split}
   |\bar{\nabla}_k g^{ij}\Gamma^k_{ij}|= & |(\partial_k g^{ij}-\Gamma^i_{kl}(g_{\epsilon})g^{lj}-\Gamma^j_{kl}(g_{\epsilon})g^{il})\Gamma^k_{ij}| \\
     =& |(\partial_k g^{ij}-2\Gamma^i_{kl}(g_{\epsilon})g^{jl})\Gamma^k_{ij}|\\
     \leq & C_1(\epsilon)Q(g^{-1}_{\epsilon},\partial g_{\epsilon}),
\end{split}
\end{equation*}
and
\[ |\Gamma^k_{kl}\Gamma^l_{ij}|\leq C_1(\epsilon)^2Q_1(g^{-1}_{\epsilon},\partial g_{\epsilon}) \]
where $Q_1$ is another quadratic form of $g^{-1}_{\epsilon}$ and $\partial g_{\epsilon}$.

Under the conditions of Theorem \ref{gepsilon}, $g_{\epsilon}\in W^{2,p}_{-q}$, then
 \[ \Gamma^k_{ij}(g_{\epsilon})=O(r^{-q-1}),\]
 \[   g^{kl}_{\epsilon}=1+O(r^{-q}), \]
 \[  g_{\epsilon ij,l}=O(r^{-q-1}),    \]
\[ Q=O(r^{-2q-2}), \]
\[ Q_1=O(r^{-2q-2}). \]
Reture to the scalar curvature, we get
\begin{equation}\label{R2}
  \begin{split}
     \Rnum{1} &\leq \int_{U_{\delta}}(C_1(\epsilon)Q+C_1(\epsilon)^2Q_1)vd\mu_{g_{\epsilon}}\\
     &= C_1(\epsilon)\int_{U_{\delta}}(Q+C_1(\epsilon)Q_1)vd\mu_{g_{\epsilon}}.
  \end{split}
\end{equation}
Observe that the integrand of the above inequality is $O(r^{-2q-2})$, and since $-2q-2< -n$ and by our assumption that the set $\{ x\in M: R_{\epsilon}<0 \}$ is compact, then the integral is finite and
\[ \Rnum{1}\geq -C_1(\epsilon)\]
and
\begin{equation}\label{R3}
  \Rnum{2}=\int_{U_{\delta}\backslash U}V\cdot\bar{\nabla}vd\mu_{g_{\epsilon}}=-\int_{0<l<\delta}\frac{1}{\delta}V\cdot\bar{\nabla}ld\mu_{g_{\epsilon}}.
\end{equation}

Therefore, we can finally prove the following theorem.
\begin{thm}
  Under the conditions of Theorem \ref{gepsilon}, we further assume that
  \[ \ll R_g,u\gg\geq 0 \]
  and the set $\Omega=\{ x\in M: Rm(x)\leq 0 \}$ is compact and satisfies
  \[ \lim_{\delta\rightarrow0}\frac{1}{\delta}\int_{0<l(x)<\delta}V\cdot\bar{\nabla}l d\mu_{h}=0 \]
  where $l(x)=dist(x,\Omega)$, for any smooth background metric $h$, $\cdot$ and $\bar{\nabla}$ are inner product and Levi-Civita connection with respect to $h$, $V$ is as in the definition of scalar curvature distribution.
  Then there is a series of smoothings $g_{\epsilon}\in W^{2,p}_{-q}$, such that
  \[\frac{1}{1+C_1(\epsilon)}g\leq g_{\epsilon}\leq (1+C_1(\epsilon))g\]
   with $\lim_{\epsilon\rightarrow0}C_1(\epsilon)=0$
   and
\[ \int_{\{R_{\epsilon}<0\}}R_{\epsilon}d\mu_{g_{\epsilon}}\geq -C(\epsilon) \]
where $\lim_{\epsilon\rightarrow0}C(\epsilon)=0$.
\end{thm}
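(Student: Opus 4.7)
The plan is to assemble the pieces already developed in Section 3 and in this section, and then pass to the limit $\delta\to 0$. First I would invoke Theorem \ref{gepsilon} to obtain the smoothings $g_\epsilon\in W^{2,p}_{-q}$ converging to $g$ in $C^{1,\alpha}_{loc}$ with the stated bilipschitz bounds; this immediately delivers the first conclusion, so the remaining task is the lower bound on $\int_{\{R_\epsilon<0\}}R_\epsilon\,d\mu_{g_\epsilon}$. By Theorem \ref{bdd}, the set $U=\{R_\epsilon\le 0\}$ is compact for every small $\epsilon$, and since $R_\epsilon$ is smooth, $\{R_\epsilon<0\}\subseteq U$ and $R_\epsilon\equiv 0$ on $U\setminus\{R_\epsilon<0\}$, so it suffices to bound $\int_U R_\epsilon\,d\mu_{g_\epsilon}$ from below.

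Next I would apply the hypothesis $\ll R_g,u\gg\ge 0$ to the test function $u=v\,\frac{d\mu_{g_\epsilon}}{d\mu_g}$, where $v$ is the Lipschitz cut-off equal to $1$ on $U$, to $1-l(x)/\delta$ on $U_\delta\setminus U$, and to $0$ outside $U_\delta$. Using Definition \ref{scalar} with $g_\epsilon$ as the background metric, this is exactly the computation \eqref{positive scalar}--\eqref{R1}, yielding
\[
\int_{U_\delta}R_\epsilon v\,d\mu_{g_\epsilon}\;\geq\;\mathrm{I}+\mathrm{II},
\]
with $\mathrm{I}$ the interior term involving the quadratic expressions in the Christoffel differences and $\mathrm{II}=-\frac{1}{\delta}\int_{0<l<\delta}V\cdot\bar\nabla l\,d\mu_{g_\epsilon}$. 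For $\mathrm{I}$, the pointwise bounds derived just above the theorem give $|\bar\nabla_k g^{ij}\Gamma^k_{ij}|\le C_1(\epsilon)Q$ and $|\Gamma^k_{kl}\Gamma^l_{ij}|\le C_1(\epsilon)^2 Q_1$, where $Q,Q_1=O(r^{-2q-2})$ thanks to $g_\epsilon\in W^{2,p}_{-q}$. Since $-2q-2<-n$ by condition (vi) (or even just $q>(n-2)/2$) and $U$ is compact, the integrals in \eqref{R2} are finite, so $\mathrm{I}\ge -C_1(\epsilon)$ with $C_1(\epsilon)\to 0$.

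Finally I would take $\delta\to 0$. The left-hand side converges to $\int_U R_\epsilon\,d\mu_{g_\epsilon}$ by dominated convergence, since $v\to\chi_U$ pointwise and $R_\epsilon$ is bounded on the compact set $U_{\delta_0}$ for any fixed $\delta_0$. For $\mathrm{II}$, hypothesis (iv) of Theorem \ref{mainth} asserts $\frac{1}{\delta}\int_{0<l<\delta}V\cdot\bar\nabla l\,d\mu_h\to 0$ for the fixed smooth background $h$; the integrand differs from the one against $d\mu_{g_\epsilon}$ by the bounded factor $d\mu_{g_\epsilon}/d\mu_h$ (the metrics $h$ and $g_\epsilon$ are uniformly equivalent on the compact neighborhood of $U$), and $V$ is the same vector field used throughout, so the same vanishing holds with $d\mu_{g_\epsilon}$. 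Hence $\mathrm{II}\to 0$ and we conclude $\int_U R_\epsilon\,d\mu_{g_\epsilon}\ge -C_1(\epsilon)$, from which the theorem follows by setting $C(\epsilon):=C_1(\epsilon)$.

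The only genuinely delicate point I anticipate is making the transition between $d\mu_h$ (in which hypothesis (iv) is stated) and $d\mu_{g_\epsilon}$ (in which the identity \eqref{R3} naturally lives) uniform in $\epsilon$; this requires observing that the Radon--Nikodym derivative $d\mu_{g_\epsilon}/d\mu_h$ stays uniformly bounded on any fixed neighborhood of the compact set $\Omega$ as $\epsilon\to 0$, which in turn follows from the $C^{1,\alpha}_{loc}$ convergence $g_\epsilon\to g$ and $g\in C^0$. Beyond that, the argument is a routine packaging of the estimates already in the section.
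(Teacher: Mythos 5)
Your outline matches the paper's structure up to and including passing to the limit $\delta\to 0$, but it misses the one genuinely delicate step and therefore has a real gap. The cut-off $v$ is built from the distance to $U=\{x:R_\epsilon(x)\le 0\}$, so the boundary term $\mathrm{II}$ in \eqref{R3} is $-\frac{1}{\delta}\int_{U_\delta\setminus U}V\cdot\bar\nabla l\,d\mu_{g_\epsilon}$ with $l=\mathrm{dist}(\cdot,U)$. Hypothesis (iv), by contrast, concerns $\Omega=\{x:Rm(g)(x)\le 0\}$ and the distance to $\Omega$, i.e.\ the integral over $\Omega_\delta\setminus\Omega$. These are different, $\epsilon$-dependent versus fixed, sets. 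You silently identify them, treating the only discrepancy as the change of measure $d\mu_h$ versus $d\mu_{g_\epsilon}$ (which is not even an issue, since the hypothesis is stated for \emph{any} smooth background $h$). You never argue why the vanishing of the limit on $\Omega_\delta\setminus\Omega$ implies anything about the limit on $U_\delta\setminus U$.

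The paper closes exactly this gap: it first invokes (iv) to get $\lim_{\delta\to 0}\frac{1}{\delta}\int_{\Omega_\delta\setminus\Omega}V\cdot\bar\nabla l\,d\mu_{g_\epsilon}=0$, then uses the approximation properties of $g_\epsilon$ from Theorem \ref{bdd} to produce a $C_2(\epsilon)\to 0$ with
\[
\mathrm{Vol}\bigl((U_\delta\setminus U)\setminus(\Omega_\delta\setminus\Omega)\bigr)<C_2(\epsilon)\,\delta
\]
uniformly in $\delta$, and combines this with $V\in L^p_{-q-1}\subset L^1_{loc}$ to bound the extra contribution from the mismatch region by $C_2(\epsilon)$. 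Only then can one conclude $\limsup_{\delta\to 0}\frac{1}{\delta}\int_{U_\delta\setminus U}V\cdot\bar\nabla l\,d\mu_{g_\epsilon}\le C_2(\epsilon)$, which, combined with the interior estimate $\mathrm{I}\ge -C_1(\epsilon)$, gives the stated lower bound. Without that volume comparison your argument would only establish the conclusion for a modified theorem in which $\Omega$ is replaced by $U$ in hypothesis (iv), which is not what is assumed. The rest of your proof (the first conclusion from Theorem \ref{gepsilon}, the estimate of $\mathrm{I}$ via the quadratic bounds and $-2q-2<-n$, and $v\to\chi_U$ giving the left-hand side limit) is correct and mirrors the paper.
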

\begin{proof}
 From Theorem \ref{gepsilon}, we get $g_{\epsilon}\in W^{2,p}_{-q}$. On $U_{\delta}\backslash U$, we have $R_{\epsilon}>0$, so $R_{\epsilon}v<R_{\epsilon}$ since $v<1$.
 Thus,
\[ \int_{U_{\delta}}R_{\epsilon}vd\mu_{g_{\epsilon}}<\int_{U_{\delta}}R_{\epsilon}d\mu_{g_{\epsilon}}\]
then
\[ \lim_{\delta\rightarrow0}\int_{U_{\delta}}R_{\epsilon}vd\mu_{g_{\epsilon}}\leq\lim_{\delta\rightarrow0}\int_{U_{\delta}}R_{\epsilon}d\mu_{g_{\epsilon}}
=\int_UR_{\epsilon}d\mu_{g_{\epsilon}} \]
    Let $\delta\rightarrow0$ in \eqref{R1}, and from \eqref{R2}, \eqref{R3}, we get
  \[ \int_{\{R_{\epsilon}\leq 0\}}R_{\epsilon}d\mu_{g_{\epsilon}} \geq -C(\epsilon)
  -\lim_{\delta\rightarrow0}\frac{1}{\delta}\int_{U_{\delta}\backslash U}V\cdot\bar{\nabla}ld\mu_{g_{\epsilon}} \]
  Let $\Omega_{\delta}=\{x\in M:dist(x,\Omega)\leq\delta\}$, from our assumption, we have
  \[\lim_{\delta\rightarrow0}\frac{1}{\delta}\int_{\Omega_{\delta}\backslash\Omega}V\cdot\bar{\nabla}ld\mu_{g_{\epsilon}}=0\]
 From Theorem $\ref{bdd}$, there exists $C_2(\epsilon)$ such that for every $\delta$,
 \[ Vol((U_{\delta}\backslash U)\backslash(\Omega_{\delta}\backslash\Omega))<C_2(\epsilon)\delta \]
 with $\lim_{\epsilon\rightarrow0}C_2(\epsilon)=0$.
 Since $g\in W^{1,p}_{-q}$, we have $V\in L^p_{-q-1}$, so $V\in L^1_{loc}$.
  Therefore,
 \begin{equation*}
 \begin{split}
       &\lim_{\delta\rightarrow0}\frac{1}{\delta}\int_{U_{\delta}\backslash U}V\cdot\bar{\nabla}ld\mu_{g_{\epsilon}}\\
       \leq &\lim_{\delta\rightarrow0}\frac{1}{\delta}\int_{\Omega_{\delta}\backslash\Omega}V\cdot\bar{\nabla}ld\mu_{g_{\epsilon}}+
       \lim_{\delta\rightarrow0}\frac{1}{\delta}\int_{(U_{\delta}\backslash U)\backslash(\Omega_{\delta}\backslash\Omega)}V\cdot\bar{\nabla}ld\mu_{g_{\epsilon}}\\
       \leq &C_2(\epsilon)
  \end{split}
 \end{equation*}
 Then we get the result.
\end{proof}

\section{The change of mass}

Now we consider the change of mass after smoothing.
Let $M$ be a smooth asymptotically flat $n$-manifold such that there is a diffeomorphism $\Phi$ between $M\backslash K$ and $\mathbb{R}^n\backslash B_1(0)$, for some compact set $K\in M$, $B_1(0)$ denotes the unit closed ball in $\mathbb{R}^n$. Choose $h$ to be the smooth background metric such that $h_{ij}=\delta_{ij}$ through $\Phi$. Let $r$ be the radial coordinate outside $K$ and less than 2 in $K$.
Let $g\in C^0\bigcap W^{1,p}_{-q}$ be an asymptotically flat metric on $M$ with $p>n$ and $q\geq \frac{n-2}{2}$.
From Theorem 3, there is a series of smoothings $g_{\epsilon}\in C^{\infty}$ such that $g_{\epsilon}\rightarrow g$ in $W^{1,p}_{loc}$ as $\epsilon\rightarrow0$.
And From \cite{Lee2015}, we have
\begin{thm}\label{m}
  Let $g$ be a $W^{2,p}_{-q}$ asymptotically flat metric with $p>n$ and $q\geq \frac{n-2}{2}$, and assume that the scalar curvature of $g$ is integrable. Then the generalized ADM mass coincides with the standard ADM mass.
\end{thm}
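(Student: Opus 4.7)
The plan is to reduce both definitions of mass to the same limit of surface integrals in the asymptotic chart, and to show that the shell-averaging in the definition of $m_{ADM}$ does not change the value.

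I would work in the distinguished chart at infinity where $h_{ij}=\delta_{ij}$, so $\bar\nabla=\partial$, $d\mu_h=dx$, and $\bar\nabla r=x/r$ is the outward unit normal to the coordinate spheres $\partial B_s$. The coarea formula then gives
\[
\frac{1}{\epsilon}\int_{\rho<r<\rho+\epsilon}V\cdot\bar\nabla r\, d\mu_h = \frac{1}{\epsilon}\int_\rho^{\rho+\epsilon}\int_{\partial B_s}V^k\hat n_k\, dS\, ds.
\]
Expanding $V^k=g^{ij}g^{kl}(\partial_j g_{il}-\partial_l g_{ij})$ by splitting $g^{ij}=\delta^{ij}+(g^{ij}-\delta^{ij})$ (and similarly for $g^{kl}$), the leading piece is $\delta^{ij}\delta^{kl}(\partial_j g_{il}-\partial_l g_{ij}) = g_{jk,j}-g_{jj,k}$, matching exactly the classical ADM integrand; the remainder is a sum of products of the form $(g^{-1}-\delta)\cdot\partial g$.

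Using the weighted Sobolev embedding $W^{2,p}_{-q}\hookrightarrow C^{1,\alpha}_{-q}$ available for $p>n$, the remainder is pointwise $O(r^{-2q-1})$, so its integral over $\partial B_s$ is at most $C s^{n-2-2q}$, which vanishes as $s\to\infty$ when $q>\frac{n-2}{2}$. Meanwhile, Bartnik's divergence identity for $R(g)$ displayed in Section 2, combined with $R(g)\in L^1$, yields
\[
\lim_{s\to\infty}\int_{\partial B_s}(g_{jk,j}-g_{jj,k})\hat n_k\, dS = 2(n-1)\omega_{n-1}\, m(g).
\]
Combining these, $\int_{\partial B_s}V^k\hat n_k\, dS\to 2(n-1)\omega_{n-1}\, m(g)$, so for every $\epsilon>0$ the shell average has the same $\liminf$ as $\rho\to\infty$, and taking $\inf_{\epsilon>0}$ gives $m_{ADM}(M,g)=m(g)$.

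The main obstacle is the borderline case $q=\frac{n-2}{2}$, where the remainder estimate only yields $O(1)$ on each sphere and pointwise convergence in $s$ is not automatic. Here I would exploit the averaging in $s$ directly, using the full identity $R(g)\sqrt{|g|}=\partial_i\bigl(\sqrt{|g|}g^{ij}(\Gamma_j-\tfrac12\partial_j\log|g|)\bigr)+Q(g^{-1},\partial g)$: the quadratic remainder $Q$ lies in $L^1$ under $g\in W^{2,p}_{-q}$ with $q\geq\frac{n-2}{2}$, and the assumption $R(g)\in L^1$ then forces the shell-averaged discrepancy between $V^k\hat n_k$ and $g_{jk,j}-g_{jj,k}$ to vanish as $\rho\to\infty$, completing the identification.
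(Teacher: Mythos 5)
The paper cites Theorem \ref{m} directly from \cite{Lee2015} and supplies no proof of its own, so there is no in-paper argument to compare against; what follows is an assessment of your proposal on its mathematical merits. For the generic range $q>\frac{n-2}{2}$ your argument is sound and is essentially the standard one: working in the asymptotic chart where $h$ is Euclidean, you decompose $V^k=g^{ij}g^{kl}(\partial_j g_{il}-\partial_l g_{ij})$ into the classical ADM integrand $g_{ik,i}-g_{ii,k}$ plus a remainder built from factors of $(g^{-1}-\delta)\,\partial g$; the weighted embedding $W^{2,p}_{-q}\hookrightarrow C^{1,\alpha}_{-q}$ (valid for $p>n$) gives the remainder pointwise decay $O(r^{-2q-1})$, hence $O(s^{\,n-2-2q})\to 0$ after integrating over $\partial B_s$; Bartnik's divergence identity together with $R(g)\in L^1$ identifies the limit of the surviving term with $c(n)\,m(g)$; and since every sphere integral converges to the same value, the shell average does too, uniformly in $\epsilon$, so $\inf_\epsilon\liminf_\rho$ recovers $c(n)\,m(g)$.

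The weak point is the borderline case $q=\frac{n-2}{2}$. You assert that the quadratic remainder $Q(g^{-1},\partial g)$ in Bartnik's identity lies in $L^1$ under $g\in W^{2,p}_{-q}$ with $q\geq\frac{n-2}{2}$; this fails at the endpoint. From $\partial g\in L^p_{-q-1}$ one gets $(\partial g)^2\in L^{p/2}_{-2q-2}$, and the weighted H\"older inclusion $L^{p/2}_{\delta}\subset L^1$ requires $\delta<-n$, i.e. $q>\frac{n-2}{2}$ strictly; at $q=\frac{n-2}{2}$ the pointwise bound only gives $(\partial g)^2=O(r^{-n})$, which is not integrable at infinity. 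Moreover, $R(g)\in L^1$ controls the divergence piece, not the pointwise error $(g^{-1}-\delta)\,\partial g$ between $V^k\hat n_k$ and the ADM integrand, so your final shell-averaging step does not close as stated. Fortunately the gap is harmless for the way the present paper invokes the theorem: the main result imposes $q=n-2>\frac{n-2}{2}$, which sits safely in the generic regime where your argument is complete.
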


\begin{thm}
  Let $M$ be a smooth $n$-manifold with asymptotically flat metric $g\in C^0\bigcap W^{1,p}_{-q}$, $p>n$, $q>\frac{n-2}{2}$. Assume that $g$ has bounded curvature
  $C'\leq Rm(g)\leq C$, in the sense of Aleksandrov and the Ricci curvature distribution $\ll R_{ij},u \gg\in L^p_{-q-2}$.
  If the distributional scalar curvature $\ll R_g,u\gg$ is a finite, signed measure outside some compact set and $q=n-2$,
  then, we have a series of smoothings $g_{\epsilon}\in W^{2,p}_{-q}$, such that
  \[ \liminf_{\epsilon\rightarrow0}m_{ADM}(M,g_{\epsilon})=m_{ADM}(M,g) \]
\end{thm}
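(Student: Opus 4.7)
The plan is to reduce the statement to a bulk-boundary identity for each smooth $g_\epsilon$, then interchange the limits $\epsilon \to 0$ and $\rho \to \infty$ using the $L^1$-integrability of the smoothed scalar curvature together with the finite signed measure assumption on $R_g$. First, Proposition~\ref{Rij} combined with the choice $q = n-2$ shows that the approximations $g_\epsilon$ supplied by Theorem~\ref{gepsilon} satisfy $R(g_\epsilon) \in L^p_{-n}$, whence $R(g_\epsilon) \in L^1(M, d\mu_{g_\epsilon})$. Theorem~\ref{m} then identifies $m_{ADM}(M, g_\epsilon)$ with the standard ADM mass of $g_\epsilon$, so that
\[
2(n-1)w_{n-1}\, m_{ADM}(g_\epsilon) \;=\; \lim_{\rho \to \infty} \int_{\partial B_\rho} V_\epsilon \cdot \bar{\nabla} r\, d\sigma_h.
\]

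Next, I apply Stokes' theorem to the divergence-form expression for $R(g_\epsilon)$ recalled in Section~2.1 on the annulus $B_\rho \setminus B_{R_0}$. Up to a quadratic-in-$\Gamma$ remainder $E_\epsilon(R_0, \rho)$ which is $O(R_0^{-\lambda})$ uniformly in $\epsilon$ and $\rho$ thanks to the $W^{2,p}_{-q}$ bounds on $g_\epsilon$, this yields
\[
2(n-1)w_{n-1}\, m_{ADM}(g_\epsilon) \;=\; \int_{\partial B_{R_0}} V_\epsilon \cdot \bar{\nabla} r\, d\sigma_h + \int_{M \setminus B_{R_0}} R(g_\epsilon)\, d\mu_{g_\epsilon} + O(R_0^{-\lambda}).
\]
I then fix $R_0$ large enough that $R_g$ is a finite signed measure on $M \setminus B_{R_0}$ and send $\epsilon \to 0$ with $R_0$ held fixed. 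The boundary integral converges by the $C^{1,\alpha}_{\mathrm{loc}}$ convergence $g_\epsilon \to g$ (averaging over a thin annulus if $\partial B_{R_0}$ is not a regular level set, consistently with the definition of generalized mass). For the bulk term, I test the distributional definition of $R_g$ against a cutoff supported in $M \setminus B_{R_0 - 1}$ and invoke $R(g_\epsilon) d\mu_{g_\epsilon} \rightharpoonup R_g$ in the distributional sense (a consequence of the $W^{1,p}_{\mathrm{loc}}$ convergence and the definition of $\ll R_g, \cdot \gg$) to obtain
\[
\int_{M \setminus B_{R_0}} R(g_\epsilon)\, d\mu_{g_\epsilon} \;\longrightarrow\; R_g(M \setminus B_{R_0}).
\]

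Finally, I let $R_0 \to \infty$. The boundary integral tends to $2(n-1)w_{n-1}\, m_{ADM}(g)$ by definition of the generalized mass; the tail $R_g(M \setminus B_{R_0})$ tends to $0$ because $R_g$ is a finite signed measure outside a compact set; and the error $O(R_0^{-\lambda})$ vanishes. Combining these, $\lim_{\epsilon \to 0} m_{ADM}(g_\epsilon) = m_{ADM}(g)$, which yields the stated $\liminf$ equality.

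The main obstacle is the uniform-in-$\epsilon$ tail estimate on $\int_{M \setminus B_{R_0}} R(g_\epsilon)\, d\mu_{g_\epsilon}$ and the quadratic remainder $E_\epsilon(R_0, \rho)$. At the borderline $q = n-2$ the weighted $L^p_{-n}$ embedding only barely delivers $L^1$ integrability, so the argument rests on the uniform $L^p_{-q-2}$ bound for $R_{ij}(g_\epsilon)$ from Proposition~\ref{Rij} together with the finite signed measure hypothesis on $R_g$, which is precisely what makes the residual tail $R_g(M \setminus B_{R_0}) \to 0$ the only remaining source of error as $R_0 \to \infty$.
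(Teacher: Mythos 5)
Your route is genuinely different from the paper's, and it has a real gap. The paper does not pass through a bulk--boundary (Stokes) identity at all. Instead it uses the pointwise comparability of the smoothings coming from Theorem~\ref{bdd} (that $\frac{1}{1+C_1(\epsilon)}g \le g_\epsilon \le (1+C_1(\epsilon))g$ and the analogous bounds for $g^{-1}$ and $\partial g$) to estimate $|V^k(g)-V^k(g_\epsilon)|$ directly, obtaining
\[
|V^k(g)-V^k(g_{\epsilon})| \;\le\; 4C_1(\epsilon)\bigl(|g^{ij}_{\epsilon}g^{kl}_{\epsilon}g_{\epsilon il,j}|+|g^{ij}_{\epsilon}g^{kl}_{\epsilon}g_{\epsilon ij,l}|\bigr),
\]
and then plugs this into the annulus-averaged definition of the generalized mass. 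The role of $q=n-2$ is only to make the limiting surface integral of $|\partial g_\epsilon|=O(r^{-n+1})$ finite, yielding $|m_{ADM}(g)-m_{ADM}(g_\epsilon)| \le C(n)\,C_1(\epsilon) \to 0$. There is never a need to handle a bulk term, so no interchange of the limits $\epsilon\to0$ and $R_0\to\infty$ arises.

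Your proposal, by contrast, requires exactly such an interchange, and this is where it breaks down. Two concrete problems. First, your opening claim that $R(g_\epsilon)\in L^p_{-n}$ implies $R(g_\epsilon)\in L^1$ is false at this borderline: $L^p_{-n}$ does not embed in $L^1$ (for instance $r^{-n}(\log r)^{-1}$ lies in $L^p_{-n}$ for every $p>1$ but not in $L^1$), so the integrability of $R(g_\epsilon)$ must be obtained another way --- the paper gets it from the finite-signed-measure hypothesis together with the curvature bounds of Theorem~\ref{bdd}, not from the weighted-Lebesgue membership. Second, and more seriously, to send $R_0\to\infty$ after $\epsilon\to0$ you need a \emph{uniform-in-$\epsilon$} smallness of $\int_{M\setminus B_{R_0}} R(g_\epsilon)\,d\mu_{g_\epsilon}$. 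The distributional convergence $R(g_\epsilon)d\mu_{g_\epsilon}\rightharpoonup R_g$ against compactly supported test functions gives no control on the tail, and the finite-signed-measure hypothesis concerns $R_g$, not the $R(g_\epsilon)$. You flag this as the ``main obstacle'' and point to the uniform $L^p_{-q-2}$ bound on the distributional Ricci curvature, but at $q=n-2$ that bound gives $L^p_{-n}$, which --- again --- does not deliver uniform $L^1$ tail control. Without a substitute for this estimate the argument does not close. The paper's pointwise-comparison strategy is precisely what makes the borderline case $q=n-2$ tractable, and it sidesteps every one of these difficulties.
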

\begin{proof}
 From Theorem \ref{gepsilon}, we have $g_{\epsilon}\in W^{2,p}_{-q}$. And by Theorem \ref{bdd}, we have $g_{\epsilon}\in L^1$ because we assumed that $R_g$ is a finite signed measure outside a compact set. Then Theorem \ref{m} implies that the distributional ADM mass of $g_{\epsilon}$ is equal to the usual ADM mass.
\begin{equation*}
\begin{split}
   &m_{ADM}(M,g)-m_{ADM}(M,g_{\epsilon})  \\
    = & \frac{1}{2(n-1)\omega_{n-1}}\inf_{\delta>0}\liminf_{\rho\rightarrow+\infty}(\frac{1}{\delta}\int_{\rho<r<\rho+\delta}V(g)\cdot\bar{\nabla}rd\mu_h)\\
    &-\frac{1}{2(n-1)\omega_{n-1}}\lim_{\rho\rightarrow+\infty}\int_{\rho=r}(g_{\epsilon ij,i}-g_{\epsilon ii,j})\upsilon_jd\sigma_h \\
    = & \frac{1}{2(n-1)\omega_{n-1}}\inf_{\delta>0}\liminf_{\rho\rightarrow+\infty}
    \left(\frac{1}{\delta}\int_{\rho<r<\rho+\delta}(V(g)-V(g_{\epsilon}))\cdot\bar{\nabla}rd\mu_h\right).
\end{split}
\end{equation*}
Now since
\begin{equation*}
\begin{split}
    & |V^k(g)-V^k(g_{\epsilon})| \\
     =& |g^{ij}g^{kl}(g_{il,j}-g_{ij,l})-g^{ij}_{\epsilon}g^{kl}_{\epsilon}(g_{\epsilon il,j}-g_{\epsilon ij,l})| \\
     \leq & |g^{ij}g^{kl}g_{il,j}-g^{ij}_{\epsilon}g^{kl}_{\epsilon}g_{\epsilon il,j}|+|g^{ij}g^{kl}g_{ij,l}-g^{ij}_{\epsilon}g^{kl}_{\epsilon}g_{\epsilon ij,l}| \\
     \leq & |((1+C_1(\epsilon))^3-1)g^{ij}_{\epsilon}g^{kl}_{\epsilon}g_{\epsilon il,j}|+|((1+C_1(\epsilon))^3-1)g^{ij}_{\epsilon}g^{kl}_{\epsilon}g_{\epsilon ij,l}| \\
     \leq & 4C_1(\epsilon)(|g^{ij}_{\epsilon}g^{kl}_{\epsilon}g_{\epsilon il,j}|+|g^{ij}_{\epsilon}g^{kl}_{\epsilon}g_{\epsilon ij,l}|)
\end{split}
\end{equation*}
we have
\begin{equation*}
\begin{split}
   &|m_{ADM}(M,g)-m_{ADM}(M,g_{\epsilon})| \\
   \leq &\frac{1}{2(n-1)\omega_{n-1}}\inf_{\delta>0}\liminf_{\rho\rightarrow+\infty}
    \left(\frac{1}{\delta}\int_{\rho<r<\rho+\delta}|V(g)-V(g_{\epsilon})|d\mu_h\right) \\
    \leq &\frac{1}{2(n-1)\omega_{n-1}}\inf_{\delta>0}\liminf_{\rho\rightarrow+\infty}\left(\frac{1}{\delta}\int_{\rho<r<\rho+\delta}
    4C_1(\epsilon)(|g^{ij}_{\epsilon}g^{kl}_{\epsilon}g_{\epsilon il,j}|+|g^{ij}_{\epsilon}g^{kl}_{\epsilon}g_{\epsilon ij,l}|)d\mu_h\right)\\
    =&\frac{4C_1(\epsilon)}{2(n-1)\omega_{n-1}}\lim_{\rho\rightarrow+\infty}\int_{r=\rho}
    (|g^{ij}_{\epsilon}g^{kl}_{\epsilon}g_{\epsilon il,j}|+|g^{ij}_{\epsilon}g^{kl}_{\epsilon}g_{\epsilon ij,l}|)d\mu_h
\end{split}
\end{equation*}
The last equality follows from the fact that $g_{\epsilon}\in W^{2,p}_{-q}$, then $\partial g_{\epsilon}\in C^{0,\alpha}_{-q}$, which consequences that the integrand of the last equality is Holder continuous.
For $q=n-2$, $\partial g_{\epsilon}=O(r^{-q-1})=O(r^{-n+1})$ implies that the last limit is finite. Thus,
\[ |m_{ADM}(M,g)-m_{ADM}(M,g_{\epsilon})|\leq C(n)C_1(\epsilon) \]
where $C(n)$ is a constant depending on $n$. Then we obtain the result.
\end{proof}

\section{Proof of the main theorem}
From the above discussion, we have shown that

\begin{thm}\label{main}
  Let $M$ be a smooth $n$-manifold $(3\leq n\leq7)$ with an asymptotically flat metric $g\in C^0\bigcap W^{1,p}_{-q}$, $p>n$, $q>\frac{n-2}{2}$. Assume that
  \renewcommand\labelenumi{(\theenumi)}
  \begin{enumerate}
    \item $g$ has bounded curvature $Rm(g)$
    \[ C'\leq Rm(g) \leq C \]
    in the sense of Aleksandrov;
    \item the Ricci curvature distribution $\ll R_{ij},u \gg\in L^p_{-q-2}$;
    \item $g$ has non-negative scalar curvature distribution $R_g$
    \[ \ll R_g,u \gg\geq 0 \]
    for any smooth compactly supported non-negative function $u$ on $M$;
    \item the set $\Omega=\{ x\in M:Rm(g)(x)\leq0 \}$ is compact;
    \item
    \[ \lim_{\delta\rightarrow0}\frac{1}{\delta}\int_{0<l(x)<\delta}V\cdot\bar{\nabla}ld\mu_{h}=0 \]
    where $l(x)=dist(x,\Omega)$, for any smooth background metric $h$, $\cdot$ and $\bar{\nabla}$ are inner product and Levi-Civita connection with respect to $h$, $V$ is as in the definition of scalar curvature distribution;
    \item $R_g$ is a finite, signed measure outside a compact set.
  \end{enumerate}
Then, we have a series of smoothings $g_{\epsilon}\in W^{2,p}_{-q}$ that satisfying
\begin{enumerate}
  \item $\frac{1}{1+C_1(\epsilon)}g\leq g_{\epsilon}\leq (1+C_1(\epsilon))g$, with $\lim_{\epsilon\rightarrow0}C_1(\epsilon)=0$;
  \item $R(g_{\epsilon})\geq B$, with fixed $B<0$;
  \item $\int_{\{ R(g_{\epsilon})<0\}} |R(g_{\epsilon})|d\mu_{g_{\epsilon}}<C(\epsilon)$, with $\lim_{\epsilon\rightarrow0}C(\epsilon)=0$ ;
  \item $R(g_{\epsilon})\in L^1$;
  \item if $q=n-2$, then $\liminf_{\epsilon\rightarrow0}m_{ADM}(M,g_{\epsilon})=m_{ADM}(M,g)$.
\end{enumerate}
\end{thm}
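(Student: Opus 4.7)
The plan is to assemble the smoothings $g_\epsilon$ from the construction of Section 3 and to read off conclusions (1)--(5) from the quantitative estimates of Sections 3--5. Hypotheses (1) and (2) are exactly the assumptions of Theorem \ref{gepsilon}, so I would invoke that theorem to produce a family $g_\epsilon \in W^{2,p}_{-q}$ with $g_\epsilon \to g$ in $C^{1,\alpha}_{loc}$. Conclusion (1) is then built into the Nikolaev--Simon approximation of Theorem \ref{bdd}, and that same theorem gives $C'-\frac{1}{\alpha}\leq Rm(g_\epsilon)\leq C+\frac{1}{\alpha}$; tracing yields a uniform lower bound $R(g_\epsilon)\geq B$ for some fixed $B<0$ once $\epsilon$ is small enough, which is conclusion (2).

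Conclusion (3) follows by feeding hypotheses (3), (4), and (5) into the theorem of Section 4, whose proof has already been established in the preceding section: that argument controls the interior piece of $\int_{\{R(g_\epsilon)<0\}} R(g_\epsilon)\,d\mu_{g_\epsilon}$ by $C_1(\epsilon)$ times a finite integral (finite because $\{R(g_\epsilon)\leq 0\}$ is compact and the quadratic Christoffel expression is $O(r^{-2q-2})$ with $2q+2>n$), while the boundary flux term is killed by hypothesis (5) as $\delta\to 0$. Conclusion (5) is precisely the theorem of Section 5; its hypotheses include $q=n-2$ and the finite-measure assumption (6), and it yields $\liminf_{\epsilon\to 0} m_{ADM}(M,g_\epsilon)=m_{ADM}(M,g)$.

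It remains to justify conclusion (4). Locally on compacta, $g_\epsilon\in W^{2,p}_{loc}$ with $p>n$ forces $R(g_\epsilon)\in L^p_{loc}\subset L^1_{loc}$. On the asymptotically flat end, hypothesis (6) asserts that $R_g$ is a finite signed measure, and since $g_\epsilon\to g$ in $W^{2,p}_{-q}$ with $q=n-2$, the Bartnik expansion of the scalar curvature in coordinates near infinity shows $R(g_\epsilon)=O(r^{-(q+2)})$ in a weighted $L^p$ sense with $q+2=n$; combined with the measure bound on $R_g$ and the $C^{1,\alpha}_{loc}$ convergence of $g_\epsilon$, this yields a uniform $L^1$ bound at infinity. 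The step I expect to be most delicate is conclusion (3), since it is the only one that simultaneously requires the geometric vanishing assumption (5) and the quadratic Christoffel estimates furnished by the metric comparability in (1); the remaining conclusions are comparatively direct consequences of the construction and of the two packaged theorems from Sections 4 and 5.
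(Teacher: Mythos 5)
Your proposal follows the paper's own route: Theorem~6.1 is, in the paper, literally introduced by ``From the above discussion, we have shown that'' and is a bundling of Theorem~\ref{gepsilon}, Theorem~\ref{bdd}, the Section~4 theorem, and the Section~5 theorem, exactly as you assemble them. Conclusions (1), (2), (3) and (5) are read off the same way you read them off. The one place your sketch goes astray is the justification of conclusion (4), $R(g_\epsilon)\in L^1$. You assert that ``$g_\epsilon\to g$ in $W^{2,p}_{-q}$,'' but the construction only gives $g_\epsilon\in W^{2,p}_{-q}$ for each $\epsilon$ together with $g_\epsilon\to g$ in $C^{1,\alpha}_{loc}$; there is no $W^{2,p}_{-q}$-convergence claim anywhere in Sections~3--5, and $g$ itself is not known to lie in $W^{2,p}_{-q}$. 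You then say that $R(g_\epsilon)=O(r^{-(q+2)})$ with $q+2=n$ ``yields a uniform $L^1$ bound at infinity,'' but $r^{-n}$ is not integrable on $\mathbb{R}^n\setminus B_1$, so pointwise decay of that order does not by itself give $L^1$; moreover conclusion (4) is stated without the restriction $q=n-2$, so the argument should not specialize to it. The intended mechanism (and the one the paper invokes, tersely) is that the scalar curvature identity \eqref{scalar curvature} with $g_\epsilon$ as background lets one compare $\int R_\epsilon v\,d\mu_{g_\epsilon}$ with $\ll R_g,u\gg$ up to quadratic Christoffel terms that are $O(r^{-2q-2})$, hence integrable; since $R_g$ is a finite signed measure outside a compact set (hypothesis~(6)) and $g_\epsilon$ is uniformly comparable to $g$, the measure transfers a uniform $L^1$ bound to $R(g_\epsilon)$ on the end, while $W^{2,p}_{loc}$ with $p>n$ handles the compact part as you say. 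Recast conclusion~(4) along those lines and the proposal is in good shape.
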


We then follow the approach of \cite{McFeron2012} to prove the positive mass theorem in our case. First, we state the existence of Ricci flow with initial data $g\in C^0$ due to M.Simon \cite{Simon2002}.

\begin{definition}
  Given a constant $1\leq\delta<\infty$, a metric $h$ is $\delta$-fair to $g$ if $h$ is $C^{\infty}$ and there exists a constant $k_0$ with
   \[ \sup_{x\in M}|Rm(h)(x)|_h=k_0<\infty \]
  and \[\frac{1}{\delta}h(p)\leq g(p)\leq\delta h(p)\]
   for all $p\in M$.
\end{definition}

\begin{thm}\label{gt}
  Let $h$ be a metric which is $1+\epsilon(n)$-fair to $g$,then
  \begin{itemize}
    \item $\exists T=T(n,k_0)>0$ and a family of metrics $g(t)$,$t\in(0,T]$ in $C^{\infty}(M\times(0,T])$ which solves the $h$-flow on $(0,T]$;
    \item $h$ is $1+2\epsilon(n)$-fair to $g(t)$ for $t\in(0,T]$;
    \item $g(t)\rightarrow g$ as $t\rightarrow 0$ uniformly on compact sets.
  \end{itemize}
  where the $h$-flow is the Hamilton-Deturk flow with metric $h$,
  \[ \frac{\text{d}}{\text{d}t}g_{ij}=-2R_{ij}+\nabla_i W_j+\nabla_j W_i \]
  \[ W_j=g_{jk}g^{pq}(\Gamma_{pq}^k-\bar{\Gamma}_{pq}^k) \]
  where $\bar{\Gamma}$ are the Christoffel symbols with respect to $h$.
\end{thm}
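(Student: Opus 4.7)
The plan is to view the $h$-flow as a quasilinear strictly parabolic system for $g_{ij}$ and to construct a solution for merely $C^0$ initial data by smoothing followed by uniform a priori estimates, which is Simon's strategy. Expanding $\nabla_i W_j+\nabla_j W_i$ in terms of $h$-covariant derivatives converts the equation schematically into
\[ \partial_t g_{ij} = g^{kl}\bar{\nabla}_k\bar{\nabla}_l g_{ij} + g^{-1}\ast g^{-1}\ast\bar{\nabla}g\ast\bar{\nabla}g + Rm(h)\ast g, \]
whose leading symbol $g^{kl}$ is uniformly elliptic because $h$ is $\delta$-fair to $g$, and whose quadratic-in-$\bar{\nabla}g$ and zeroth-order terms are controlled by $k_0$ and the fairness constant. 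This is precisely the gain of the DeTurck modification over the bare Ricci flow.

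The first step is to approximate. Using a locally finite atlas and mollification in each chart, produce a sequence of smooth Riemannian metrics $g^{\alpha}\to g$ uniformly on compact sets, each still $(1+\epsilon(n))$-fair to $h$ for $\alpha$ large. For each smooth $g^{\alpha}$, Shi's short-time existence yields a smooth solution $g^{\alpha}(t)$ of the $h$-flow on some maximal interval $[0,T_{\alpha})$. The crux is to prove $T_{\alpha}\geq T(n,k_0)>0$ via two families of uniform estimates. First, a maximum principle applied to $e^{-Ct}\bigl(g^{\alpha}-(1+2\epsilon(n))h\bigr)$ and $e^{-Ct}\bigl((1+2\epsilon(n))^{-1}h-g^{\alpha}\bigr)$, with $C=C(n,k_0)$, shows that pointwise $\delta$-fairness is preserved with the advertised slight loss. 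Second, Bernstein-type interior estimates, obtained by commuting $\bar{\nabla}$ with the evolution, multiplying by suitable $h$-cutoffs on parabolic cylinders, and absorbing the highest-order terms into the parabolic operator, give
\[ |\bar{\nabla}^k g^{\alpha}(t)|_h \leq C_k(n,k_0)\, t^{-k/2},\qquad t\in(0,T], \]
uniformly in $\alpha$.

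With these bounds, Arzel\`a-Ascoli extracts a subsequence converging in $C^{\infty}_{loc}(M\times(0,T])$ to a smooth limit $g(t)$ which solves the $h$-flow and inherits $(1+2\epsilon(n))$-fairness. To recover the initial data uniformly, integrate the evolution equation in $t$ and use the $L^\infty$ bound on $\partial_t g^{\alpha}$ coming from the interior estimate to conclude $|g^{\alpha}(t)-g^{\alpha}|_h\leq C\sqrt{t}$ on compact sets; combined with $g^{\alpha}\to g$ uniformly, this gives $g(t)\to g$ uniformly on compact sets as $t\to 0$. The main obstacle is the interior estimate with constants depending only on $n$ and $k_0$ and not on any smoothness of the initial data: the leading coefficient $g^{kl}$ is only $C^0$ in space at $t=0$, so the Bernstein argument must be run on parabolic cylinders of size $\sim\sqrt{t}$ where $\delta$-fairness alone supplies uniform parabolicity and the time weighting $t^{-k/2}$ absorbs the missing initial regularity.
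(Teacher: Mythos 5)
The paper does not prove Theorem \ref{gt}: it is quoted verbatim from M.~Simon \cite{Simon2002}, and the paper merely records the four-step construction immediately below the statement (smooth approximations $g_\epsilon$, short-time $h$-flow for each $g_\epsilon$, the uniform estimates $|\bar{\nabla}^k g_\epsilon(t)|\leq C_k t^{-k/2}$ with $T$ independent of $\epsilon$, and a diagonal $C^\infty_{loc}$ limit). Your outline matches that scheme and Simon's strategy in its broad strokes: writing the $h$-flow as a quasilinear strictly parabolic system with leading coefficient $g^{kl}$, using $\delta$-fairness to $h$ for uniform parabolicity, Bernstein-type interior estimates on parabolic cylinders of scale $\sqrt{t}$, and an Arzel\`a--Ascoli limit.

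There is, however, a genuine gap in your argument for the third bullet, $g(t)\to g$ in $C^0_{loc}$ as $t\to 0$. You deduce it by ``integrating the evolution equation in $t$ and using the $L^\infty$ bound on $\partial_t g^{\alpha}$ coming from the interior estimate to conclude $|g^{\alpha}(t)-g^{\alpha}|_h\leq C\sqrt{t}$.'' But the interior estimates you have just invoked give $|\bar{\nabla}^2 g^{\alpha}(t)|\lesssim t^{-1}$ and $|\bar{\nabla}g^{\alpha}(t)|^2\lesssim t^{-1}$, hence $|\partial_t g^{\alpha}(t)|\lesssim t^{-1}$, which is not integrable at $t=0$; integrating in time therefore yields a logarithmic divergence, not a bound, and certainly not a $\sqrt{t}$ rate. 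Moreover a $\sqrt{t}$ rate would require Lipschitz regularity of the initial data, whereas $g$ is only $C^0$. In Simon's proof the $C^0$ attainment of initial data is a \emph{separate} estimate, obtained by a maximum-principle/barrier comparison: using uniform parabolicity (from fairness) together with the modulus of continuity of $g$ on compact sets, one shows $\sup_K|g^{\alpha}(t)-g^{\alpha}(0)|\leq \omega_K(t)$ for a modulus $\omega_K(t)\to 0$ that is uniform in $\alpha$, and this survives passage to the limit. You need to supply that comparison argument; it cannot be replaced by integrating $\partial_t g$. (A minor point: invoking ``Shi's short-time existence'' for the $h$-flow with smooth initial data is a slight misattribution. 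Once strict parabolicity is established from fairness, standard quasilinear parabolic existence theory is what is used; Shi's theorem is formulated for the Ricci flow on noncompact manifolds and would reintroduce the gauge-fixing the DeTurck trick is meant to remove.)
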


This flow is equivalent to the Ricci flow under a diffeomorphism and this flow is parabolic but the Ricci flow is not.
The way to construct the solution $g(t)$ is as follows.

\begin{enumerate}[(i)]
  \item take a sequence of smoothings $g_{\epsilon}$ converges to $g$;
  \item for every $\epsilon$, solve the $h$-flow in a short time with initial data $g_{\epsilon}$ to obtain $g_{\epsilon}(t)$;
  \item there exists a $T>0$ independent of $\epsilon$ such that
  \begin{equation}\label{derivative}
    |\bar{\nabla}^kg_{\epsilon}(t)|\leq C_kt^{-\frac{k}{2}}
  \end{equation}
  for any $t\in (0,T]$, here $\bar{\nabla}$ is the Levi-Civita connection with respect to $h$ and $C_k$ is independent of $\epsilon$;
  \item extract a subsequence of $g_{\epsilon}(t)$ as $\epsilon\rightarrow0$ such that $g_{\epsilon}(t)\rightarrow g(t)$ in $C^k$ for any $k$ on compact set.
\end{enumerate}

By Theorem \ref{main}, we know $g_{\epsilon}$ is $(1+C_1(\epsilon))$ fair to $g$ and because the curvature of $g$ is uniformly bounded and thus the curvature of $g_{\epsilon}$ is also uniformly bounded by Theorem \ref{bdd}. Let $g(t)$ be the $g_{\epsilon}$-flow on $(0,T]$ for sufficiently small $\epsilon$ constructed in Theorem \ref{gt}. In fact, we can modify $g_{\epsilon}$ to be equal to the Euclidean metric outside a compact set, and it will also be $1+C(\epsilon)$-fair to $g$. Let $g_{\epsilon}(t)$ be the $g_{\epsilon}$-flow with initial data $g_{\epsilon}$ in a short time.

Now $g_{\epsilon}\in W^{2,p}_{-q}\subset C^{1,\alpha}_{-q}$ for $p>n$, $q>\frac{n-2}{2}$. By Lemma 16 of \cite{McFeron2012}, there is a $T>0$ independent of $\epsilon$,
such that for any $t_0\in(0,T]$, we have $g_{\epsilon}(t)\in C^{1,\alpha}_{-(q-\alpha)}$ for $t\in [t_0,T]$ and $\epsilon>0$. By taking $\epsilon\rightarrow0$, we conclude
that $g(t)\in C^{1,\alpha}_{-(q-\alpha)}$ for $t\in [t_0,T]$. If $q>\frac{n-2}{2}$ and $\alpha$ is sufficiently small, then $g(t)\in C^{1,\alpha}_{-q'}$ with $q'=q-\alpha>\frac{n-2}{2}$. Therefore, $g(t)$ is asymptotically flat in $C^{1,\alpha}_{-q}$ with $q>\frac{n-2}{2}$.

Next, we will check that $g(t)$ satisfies the assumptions of positive mass theorem. The following theorem in \cite{McFeron2012} will be used.
\begin{thm}\label{14}
  Suppose that $g_{\epsilon}\rightarrow g$ locally uniformly in $C^2$ and $g_{\epsilon}\in C^{1,\alpha}_{-q}$ for $q>\frac{n-2}{2}$. In addition we assume that $R(g_{\epsilon})\in L^1$ for all $\epsilon>0$ and the scalar curvature of $g_{\epsilon}$ satisfies
  \begin{equation}\label{non}
    \int_{\{R(g_{\epsilon})<0\}}|R(g_{\epsilon})|dV<\epsilon
  \end{equation}
  Then the scalar curvature $R(g)$ of $g$ is nonnegative and integrable, and
  \[ m(g)\leq\liminf_{\epsilon\rightarrow0}m(g_{\epsilon}) \]
  assuming that the limit is finite.
\end{thm}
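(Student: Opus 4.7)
The plan is to establish the three conclusions of Theorem~\ref{14} in sequence: first the pointwise non-negativity $R(g)\ge 0$, then $R(g)\in L^1$, and finally the mass inequality $m(g)\le\liminf_{\epsilon\to 0}m(g_\epsilon)$. All three rest on the Bartnik volume-surface identity recalled at the end of Section~2 combined with Fatou's lemma and the locally uniform $C^2$ convergence $g_\epsilon\to g$.

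For the pointwise non-negativity I would argue by contradiction. If $R(g)(x_0)<0$ at some $x_0$, then by the locally uniform $C^2$ convergence, $R(g_\epsilon)\le R(g)(x_0)/2<0$ on a fixed neighborhood $U$ of $x_0$ for all $\epsilon$ sufficiently small, so
\[ \int_{\{R(g_\epsilon)<0\}}|R(g_\epsilon)|\,dV_{g_\epsilon}\ \ge\ \tfrac{1}{2}|R(g)(x_0)|\,\mathrm{Vol}_{g_\epsilon}(U), \]
a uniform positive lower bound that contradicts \eqref{non}. For the $L^1$ conclusion I invoke the Bartnik identity
\[ c(n)\,m(g_\epsilon)=\int_{M\setminus B_r}R(g_\epsilon)\,dV_{g_\epsilon}+\int_{\partial B_r}(g_{\epsilon ij,j}-g_{\epsilon jj,i})dS^i+O(r^{-\lambda}). \]
Fix $r$ large. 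In the applications of this theorem the smoothings come with the pointwise comparisons from Section~3, which supply uniform $C^{1,\alpha}_{-q}$ bounds on $g_\epsilon-\delta$; hence the boundary flux and the $O(r^{-\lambda})$ error are controlled uniformly in $\epsilon$, and the finite value $\liminf m(g_\epsilon)<\infty$ forces $\int_{M\setminus B_r}R(g_\epsilon)\,dV_{g_\epsilon}$ to be uniformly bounded. Adding the locally uniformly bounded interior piece makes $\int_M R(g_\epsilon)\,dV_{g_\epsilon}$ uniformly bounded; combined with \eqref{non} so is $\int_M R(g_\epsilon)^+\,dV_{g_\epsilon}$. Fatou applied to the non-negative a.e.\ convergence $R(g_\epsilon)^+\sqrt{|g_\epsilon|}\to R(g)\sqrt{|g|}$ then gives $R(g)\in L^1$.

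The main step is the mass inequality. I would apply the Bartnik identity to $g$ and to $g_\epsilon$ at a fixed large $r$ and subtract:
\[ c(n)(m(g_\epsilon)-m(g))=\int_{M\setminus B_r}\!\big(R(g_\epsilon)dV_{g_\epsilon}-R(g)dV_g\big)+\int_{\partial B_r}\!\big((g_\epsilon-g)_{ij,j}-(g_\epsilon-g)_{jj,i}\big)dS^i+O(r^{-\lambda}). \]
For fixed $r$, locally uniform $C^2$ convergence forces the boundary flux to $0$ as $\epsilon\to 0$. Splitting the bulk integral using $R(g_\epsilon)=R(g_\epsilon)^+-R(g_\epsilon)^-$ and $R(g)=R(g)^+$, Fatou's lemma applied to the non-negative pointwise a.e.\ limit $R(g_\epsilon)^+\sqrt{|g_\epsilon|}\to R(g)\sqrt{|g|}$, together with the bound $\int R(g_\epsilon)^-\,dV_{g_\epsilon}<\epsilon$ from \eqref{non}, yields $\liminf_{\epsilon\to 0}\int_{M\setminus B_r}(R(g_\epsilon)dV_{g_\epsilon}-R(g)dV_g)\ge 0$. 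Finally $O(r^{-\lambda})\to 0$ as $r\to\infty$, producing $\liminf_{\epsilon\to 0}m(g_\epsilon)\ge m(g)$. The principal obstacle is making the $O(r^{-\lambda})$ error and the uniform integrability at infinity work uniformly in $\epsilon$; this depends on having asymptotic $C^{1,\alpha}_{-q}$ decay of $g_\epsilon$ uniform in $\epsilon$, which is provided in the relevant application by the pointwise comparisons $(1+C_1(\epsilon))^{-1}g\le g_\epsilon\le (1+C_1(\epsilon))g$ and the analogous bounds on $\partial g_\epsilon$ furnished by Theorem~\ref{bdd}.
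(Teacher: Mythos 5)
The paper does not actually prove Theorem~\ref{14}: it is quoted from \cite{McFeron2012} (their Lemma~14) and used as a black box. Your reconstruction follows the same architecture that the cited source uses --- Bartnik's volume--surface identity for the mass, the contradiction argument via locally uniform $C^2$ convergence for pointwise non-negativity, and Fatou together with the smallness of $\int R(g_\epsilon)^-\,dV_{g_\epsilon}$ for both $R(g)\in L^1$ and the mass inequality --- so the skeleton is a faithful outline of the intended proof.

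There is, however, a real flaw in how you propose to close the one gap you yourself identify. The passage from the finiteness of $\liminf m(g_\epsilon)$ to the boundedness of $\int_{M\setminus B_r} R(g_\epsilon)\,dV_{g_\epsilon}$, and the claim that the annular error $O(r^{-\lambda})$ goes to zero uniformly in $\epsilon$, both require a uniform-in-$\epsilon$ bound on the weighted decay $\|g_\epsilon-\delta\|_{C^1_{-q}}$. Mere membership $g_\epsilon\in C^{1,\alpha}_{-q}$ for each $\epsilon$ does not deliver this: one can move a bump toward spatial infinity so that $g_\epsilon\to g$ in $C^\infty_{loc}$ while $\|g_\epsilon-\delta\|_{C^1_{-q}}\to\infty$, and the mass inequality can fail. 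You attribute the required uniformity to the pointwise comparisons $(1+C_1(\epsilon))^{-1}g\le g_\epsilon\le(1+C_1(\epsilon))g$ from Theorem~\ref{bdd}, but those compare $g_\epsilon$ to $g$, not to the Euclidean metric; at large $r$ they give only $|g_\epsilon-\delta|\lesssim |g-\delta|+C_1(\epsilon)$, which does not decay and in particular does not yield a uniform $C^1_{-q}$ bound. In the paper's actual application (Section~6), Theorem~\ref{14} is invoked for the flowed family $g_\epsilon(t)\to g(t)$, not for $g_\epsilon\to g$, and the uniform weighted decay there is supplied by the $h$-flow estimates (Lemma~16 of \cite{McFeron2012}), not by Theorem~\ref{bdd}. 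With the uniform bound taken from that source, your argument goes through; without naming the correct source, the proposal does not close the gap it flags.
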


Thus, we only have to check that $g_{\epsilon}(t)$ satisfies the assumptions of Theorem \ref{14}. There is some slight difference between our condition and theirs in \cite{McFeron2012}.

\begin{thm}\label{15}
   Let $M$ be an asymptotically flat $n$-manifold with smooth metric $g$. Suppose  $g\in W^{2,p}_{-q}$, $p>n$, $q>\frac{n-2}{2}$, the scalar curvature $R(g)$ is integrable, and $g$ has bounded curvature. Let $g(t)$ be the Ricci flow for $t\in [0,T]$ starting with $g(0)=g$. If
   \[ \int_{M\backslash B_r}|R(g)|dV<\eta(r) \]
   with $\lim_{r\rightarrow\infty}\eta(r)=0$. Then there exists $\tilde{\eta}(r)$ depending not on $t$, such that for $t\in[0,T]$ we have
   \[ \int_{M\backslash B_r}|R(g(t))|dV<\tilde{\eta}(r) \]
   and $\lim_{r\rightarrow\infty}\tilde{\eta}(r)=0$. In particular, $R(g(t))\in L^1$ for $t\in [0,T]$.
\end{thm}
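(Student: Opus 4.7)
The plan is to exploit the evolution equation $\partial_t R = \Delta R + 2|Ric|^2$ together with the preservation of the weighted regularity $W^{2,p}_{-q}$ along the Ricci flow (Dai--Ma~\cite{Dai2007}, Li~\cite{Li2018}) to propagate the initial $L^1$-tail bound $\eta(r)$ uniformly in $t\in[0,T]$. Since $g$ is smooth, lies in $W^{2,p}_{-q}$ with $q>\frac{n-2}{2}$, and has bounded curvature, Shi's short-time existence~\cite{Shi1989} combined with the Ecker--Huisken maximum principle~\cite{Ecker1991} guarantees that $g(t)$ remains in $W^{2,p}_{-q}$ on $[0,T]$ with norms bounded uniformly in $t$. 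Consequently $R(g(t))$ obeys a uniform weighted bound of order $-q-2$ and $|Ric(g(t))|^2$ of order $-2q-4$; in particular, since $2q+4>n$, the tail $\int_{M\setminus B_r}|Ric(g(t))|^2\,d\mu \to 0$ as $r\to\infty$ uniformly in $t$.

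The main computation is then to fix a smooth cutoff $\phi_r$ with $\phi_r\equiv 0$ on $B_r$, $\phi_r\equiv 1$ on $M\setminus B_{2r}$, $|\bar\nabla\phi_r|\leq C/r$, $|\bar\Delta\phi_r|\leq C/r^2$, and track
\[ F_r(t) := \int_M \phi_r R(g(t))\,d\mu_{g(t)}. \]
Using $\partial_t d\mu_{g(t)} = -R\,d\mu_{g(t)}$ and integrating by parts (boundary terms at infinity vanishing by step one),
\[ F_r'(t) = \int_M R\,\Delta_{g(t)}\phi_r\,d\mu_{g(t)} + \int_M \phi_r\bigl(2|Ric|^2-R^2\bigr)\,d\mu_{g(t)}. \]
The first term is supported on $B_{2r}\setminus B_r$ and, by the uniform decay of $R$ together with $|\Delta\phi_r|\leq C/r^2$, is $o(1)$ as $r\to\infty$ uniformly in $t$. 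The second term, after discarding the nonpositive $-\phi_r R^2$ contribution, is bounded by $2\int_{M\setminus B_r}|Ric|^2\,d\mu_{g(t)}$, which is $o(1)$ uniformly in $t$ by step one. Integrating in time and using $F_r(0)\leq \eta(r)$ yields $F_r(t)\leq \eta(r)+T\cdot\omega(r)$ for some $\omega(r)\to 0$ independent of $t$.

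To upgrade this to a bound on $\int_{M\setminus B_r}|R(g(t))|\,d\mu_{g(t)}$, I would use the Ricci flow maximum principle: since $\partial_t R \geq \Delta R + \frac{2}{n}R^2$, one has $R_{\min}(g(t))\geq R_{\min}(g)/\bigl(1-\tfrac{2}{n}R_{\min}(g)\,t\bigr)$, which gives a $t$-uniform lower bound on $R(g(t))$; then the analogous cutoff estimate applied to $-R$ controls the negative part and furnishes $\tilde\eta(r) := \eta(r)+T\,\omega(r)$ plus an analogous contribution from the negative part. The hard part will be step one: producing truly $t$-uniform weighted decay of $R(g(t))$ and $|Ric(g(t))|^2$ (not merely $W^{2,p}_{-q}$ regularity at each fixed time) that is strong enough to drive both $\int R\,\Delta\phi_r\,d\mu$ and $\int_{M\setminus B_r}|Ric|^2\,d\mu$ to zero simultaneously as $r\to\infty$; this is what requires the Dai--Ma preservation result together with Shi's global higher-derivative estimates, rather than just a local short-time theory.
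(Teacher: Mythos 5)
Your plan follows the same overall route as the paper: modify the McFeron--Szekelyhidi argument (cutoff function, evolution equation $\partial_t R = \Delta R + 2|Ric|^2$, integration by parts, maximum-principle lower bound on $R$) and reduce the problem to controlling the tail $\int_{M\setminus B_r}|Ric(g(t))|^2\,d\mu$. However, you gloss over the one step that the paper actually has to supply, and in doing so you implicitly use an estimate that the paper explicitly says is \emph{not} available.

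You write that $g(t)\in W^{2,p}_{-q}$ implies ``$|Ric(g(t))|^2$ of order $-2q-4$; in particular, since $2q+4>n$, the tail $\int_{M\setminus B_r}|Ric(g(t))|^2\,d\mu\to0$.'' Read as pointwise decay $|Ric|^2=O(r^{-2q-4})$, this is precisely what $W^{2,p}_{-q}$ regularity does \emph{not} give: $g\in W^{2,p}_{-q}$ yields only $Ric\in L^p_{-q-2}$, not $Ric\in L^\infty_{-q-2}$, because you lose a derivative relative to the $C^2_{-q}$ hypothesis of McFeron--Szekelyhidi. The paper flags exactly this (``we do not have the decay of Ricci curvature, i.e.\ $|Ric|^2<C|x|^{-2q-4}$'') and then spends the whole proof proving, via H\"older's inequality on the weighted $L^p$ norm, that
\[
\int_{M\setminus B_{r_1}}|Ric|^2\,dx \;\leq\; \|Ric\|^2_{L^p_{-q-2}}\cdot C\, r_1^{-2\frac{p-2}{p}},
\]
which still tends to zero (with a weaker power than the $r_1^{-2}$ of the $C^2_{-q}$ case) because $q>\frac{n-2}{2}$ makes the relevant weight subcritical. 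Your ``in particular, since $2q+4>n$'' buries this computation; in the $W^{2,p}_{-q}$ setting it is not a one-line consequence of a pointwise rate, and it is the actual content of the paper's Theorem~\ref{15}. A similar remark applies to your control of $\int R\,\Delta_{g(t)}\phi_r\,d\mu$ ``by the uniform decay of $R$'': there is no pointwise decay of $R$ here either, only $R\in L^p_{-q-2}$, though for that term the $L^1$-bound together with $|\Delta\phi_r|\lesssim r^{-2}$ already suffices. On the other hand, you correctly identify that the $t$-uniformity of the weighted bounds is delicate --- the paper is in fact terse about this as well, estimating only at $t=0$ and leaning on the McFeron--Szekelyhidi machinery for propagation --- so that caveat of yours is well-placed. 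In short: same approach, but you have skipped the H\"older step that is the point, and need to replace the implicit pointwise Ricci decay with it.
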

\begin{proof}
  The proof of the theorem is similar to the one in \cite{McFeron2012}. The difference is that we only assume $g\in W^{2,p}_{-q}$ while they supposed that $g\in C^2_{-q}$.
  Thus, we do not have the decay of Ricci curvature, i.e. $|Ric|^2<C|x|^{-2q-4}<C|x|^{-n-2}$, which consequences
   \begin{equation}\label{ric}
     \int_{M\backslash B_{r_1}}|Ric|^2dV<C\int_{M\backslash B_{r_1}}r^{-n-2}dV\leq Cr_1^{-2}
   \end{equation}
    This is used in the proof of \cite{McFeron2012}. We will show that \eqref{ric} is also right in our situation with a bit difference and then the theorem is proved.
    For $g\in W^{2,p}_{-q}$, then the Ricci curvature $Ric$ of $g$ is in $L^p_{-q-2}$. Since $p>n\geq 3$, then $\frac{p}{2}>1$ and by Holder inequality, we have
    \begin{equation*}
    \begin{split}
        & \int_{M\backslash B_{r_1}}|Ric|^2dx \\
         \leq & \left(\int_{M\backslash B_{r_1}}\left( |Ric|^2r^{2(q+2)-\frac{2n}{p}} \right)^{\frac{p}{2}}dx \right)^{\frac{2}{p}}
                \left( \int_{M\backslash B_{r_1}}\left(r^{-2(q+2)+\frac{2n}{p}}\right)^{\frac{p}{p-2}}dx \right)^{\frac{p-2}{p}}  \\
        = & \left( \int_{M\backslash B_{r_1}}|Ric|^pr^{(q+2)p-n}dx \right)^{\frac{2}{p}}
            \left(\int_{M\backslash B_{r_1}}r^{(-2(q+2)+\frac{2n}{p})\frac{p}{p-2}}dx\right)^{\frac{p-2}{p}}  \\
        = & \|Ric\|^2_{L^p_{-q-2}}\cdot \left( \int_{M\backslash B_{r_1}}r^adx \right)^{\frac{p-2}{p}}
    \end{split}
    \end{equation*}
    where $a=(-2(q+2)+\frac{2n}{p})\frac{p}{p-2}$. Since $q>\frac{n-2}{2}$, $p>n$, then we have $2(q+2)>n+2$. Therefore,
    \begin{equation*}
      \begin{split}
         a= & (-2(q+2)+\frac{2n}{p})\frac{p}{p-2} \\
           =& -2(q+2)\left(1+\frac{2}{p-2}\right)+\frac{2n}{p-2} \\
           =& -2(q+2)+\frac{2}{p-2}(n-2(q+2)) \\
           <& -(n+2)+\frac{2}{p-2}(n-(n+2))\\
           =& -(n+2)-\frac{4}{p-2}<-n-2
      \end{split}
    \end{equation*}
    As a result,
    \[ \int_{M\backslash B_{r_1}}|Ric|^2dx<C\left(\int_{M\backslash B_{r_1}}r^{-n-2}dx\right)^\frac{p-2}{p}=Cr_1^{-2\frac{p-2}{p}} \]
    and $-2\frac{p-2}{2}=-2+\frac{4}{p}\leq -2+\frac{4}{3}<0$. This is enough for proving the theorem.
\end{proof}

Apply Theorem \ref{15} to $g_{\epsilon}$, we obtain $R(g_{\epsilon}(t))\in L^1$ for $t\in [0,T]$ and for every $\epsilon>0$. To check the condition \eqref{non}, we need the next theorem.

\begin{thm}\label{16}
  Let $M$ be an asymptotically flat $n$-manifold with smooth metric $g$. Suppose  $g\in W^{2,p}_{-q}$, $p>n$, $q>\frac{n-2}{2}$, the scalar curvature $R(g)$ is integrable, and $g$ has bounded curvature. Let $g(t)$ be the Ricci flow for $t\in [0,T]$ starting with $g(0)=g$. For any $t>0$ we have along the Ricci flow
  \[ \lim_{r\rightarrow\infty}\int_{\partial B_r}|\nabla R|dS=0 \]
  Moreover, if $t_0>0$ then this convergence is uniform for $t\in [t_0,T]$.
\end{thm}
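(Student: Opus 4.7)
The plan is to adapt the proof of McFeron--Szekelyhidi \cite{McFeron2012}, who proved the analogous statement for $C^2_{-q}$ initial data, to the weaker $W^{2,p}_{-q}$ setting. The central tool is the evolution equation $\partial_t R = \Delta R + 2|Ric|^2$ together with Shi's parabolic derivative estimates.

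\emph{Pointwise decay.} Since the initial curvature is bounded, the maximum principle preserves $|Rm(g(t))|\leq C$ along the flow, and Shi's interior estimates give uniform bounds $|\nabla^k Rm(g(t))|\leq C_k(t_0)$ on $M\times[t_0,T]$. In particular $R$, $\nabla R$ and $\nabla^2 R$ are uniformly bounded, so $R$ is uniformly Lipschitz in space. Combining this with the uniform control $R(g(t))\in L^1$ from Theorem~\ref{15} forces $R(x,t)\to 0$ as $x\to\infty$, uniformly for $t\in[t_0,T]$ (a bounded Lipschitz function with finite $L^1$ norm must vanish at infinity). The localized Landau--Kolmogorov interpolation $|\nabla R(x,t)|^2\leq C\,\|R\|_{L^\infty(B(x,1))}\,\|\nabla^2 R\|_\infty$ then yields the same pointwise decay for $|\nabla R|$.

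\emph{Weighted decay.} To integrate this on spheres we need a quantitative rate. Propagation of weighted spatial decay along the DeTurck-modified Ricci flow (in the spirit of Dai--Ma \cite{Dai2007} and Li \cite{Li2018}), combined with interior Shi estimates for $t>0$, gives $g(t)\in W^{3,p}_{-q}$ with constants uniform on $[t_0,T]$, hence $\nabla R(g(t))\in L^p_{-q-3}$ uniformly. A Hölder estimate analogous to the one in the proof of Theorem~\ref{15} then yields $\int_{M\setminus B_r}|\nabla R|^2\,dV\leq Cr^{-\beta}$ for some $\beta>0$.

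\emph{Sphere integral.} A dyadic pigeonhole on annuli $A_{r,2r}$ produces radii $s\in[r,2r]$ with $\int_{\partial B_s}|\nabla R|^2\,dS\leq Cr^{-\beta-1}$, and Cauchy--Schwarz then gives $\int_{\partial B_s}|\nabla R|\,dS\leq Cr^{(n-2-\beta)/2}\to 0$. The pointwise uniform decay of $|\nabla R|$ from the first step prevents concentration on intermediate spheres, so a comparison between neighbouring spheres upgrades this to convergence for all $r\to\infty$, uniformly on $[t_0,T]$.

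The main obstacle is the weighted-propagation step: establishing that the spatial weighted decay of $\nabla R$ survives the flow with constants uniform on $[t_0,T]$, and uniform in the smoothing parameter $\epsilon$ when one works through the $g_\epsilon$ of Section~3. This calls for a weighted parabolic a~priori estimate at the level of $\nabla^3 g$, analogous to but strictly stronger than the Ecker--Huisken maximum principle used by Dai--Ma. One also needs to verify $\beta>n-2$ in the Cauchy--Schwarz step, which is tightest at $n=7$ where the raw exponent $\beta=2q+6-n$ coming from the weighted Sobolev embedding is borderline; here one must supplement it by using the $L^1$ control of $R$ along the flow from Theorem~\ref{15} to improve the effective decay rate.
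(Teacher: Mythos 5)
Your proposal takes a genuinely different route from the paper, and the route has two concrete gaps that you yourself partially identify.

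The paper does not attempt to propagate weighted Sobolev decay of $\nabla R$ along the flow. Instead it applies the local parabolic maximum principle (LSU Theorem 8.1) and the derivative estimate (LSU Theorem 11.1) to the evolution equation $\partial_t R=\Delta R+2|Ric|^2$ on parabolic cylinders $Q(r)$ outside the compact core, where the flow metrics are uniformly equivalent to the Euclidean metric for $t\in[t_0/2,T]$. This yields
\[
\sup_{Q(r/4)}|\nabla R|\leq C(r)\left(\|R\|_{L^2(Q(r))}+\||Ric|^2\|_{L^s(Q(r))}\right),\qquad s>\tfrac{n}{2},
\]
and after covering $\partial B_a$ by small balls and integrating, $\int_{\partial B_a}|\nabla R|\,dS$ is controlled by $\|R\|_{L^2}$ and $\||Ric|^2\|_{L^s}$ over an annular space--time region $A(a,r)$. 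Since $g\in W^{2,p}_{-q}$ gives $R\in L^2$ and $Ric\in L^p_{-q-2}$, a Hölder estimate (choosing $n<2s<p$) forces both norms to vanish as $a\to\infty$. The crucial feature --- and the paper's explicit remark --- is that because the initial data only gives $Ric\in L^p_{-q-2}$ rather than pointwise decay $|Ric|^2\lesssim r^{-n-2}$, one cannot use the $L^\infty$ norm of $|Ric|^2$ as McFeron--Szekelyhidi did; the $L^s$ norm with $s>n/2$ is the correct substitute, and is strong enough for the LSU estimates. In short, the parabolic smoothing converts spatial decay of the \emph{right-hand-side} data $R$ and $Ric$ into a sup bound on $\nabla R$, with no need to propagate decay of $\nabla R$ itself.

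Your first gap is the one you flag: deriving $g(t)\in W^{3,p}_{-q}$ with constants uniform in $t\in[t_0,T]$ and uniform in the smoothing parameter $\epsilon$ requires a weighted parabolic estimate at the level of $\nabla^3 g$ that is not available from Dai--Ma or Ecker--Huisken, and is nowhere established in the paper or in the cited references; this step would be an entire lemma on its own, strictly harder than what the theorem asserts. Your second gap is the upgrade from the dyadic sequence of good radii to all radii. The Landau--Kolmogorov interpolation $|\nabla R|^2\lesssim \|R\|_{L^\infty(B(x,1))}\|\nabla^2 R\|_\infty$ yields only qualitative pointwise decay $|\nabla R|(x)\to 0$ with no rate (because the underlying $L^1+$Lipschitz argument for $R$ gives no rate either), and qualitative pointwise decay cannot defeat the sphere volume $\omega_{n-1}r^{n-1}$; thus it cannot ``prevent concentration on intermediate spheres.'' You would need a quantitative pointwise rate for $|\nabla R|$, which is exactly what the LSU estimates in the paper's proof deliver and what your weighted-propagation step was supposed to supply. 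Finally, your own worry about the exponent $\beta$ at $n=7$ is real under this scheme and is simply absent in the paper's approach, where the exponent bookkeeping works cleanly from $2(q+2)>n+2$ and $n<2s<p$.
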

\begin{proof}
   We know the evolution equation of the scalar curvature $R(g(t))(x)$ under the Ricci flow is
   \[ \frac{\partial R}{\partial t}=\Delta R+2|Ric|^2 \]
   In addition we will work outside the compact set $B$ where the asymptotic coordinates are defined. As long as $t\in [\frac{t_0}{2},T]$, the metrics along the flow are uniformly equivalent to the Euclidean metric outside $B$ and also their derivatives are controlled. Thus the evolution equation is uniformly parabolic.
   Let $\tau\in [t_0,T]$, $p\in M\backslash B$, and choose $r$ with $0<r\leq \sqrt{\tau-\frac{t_0}{2}}$. Define the parabolic cylinder
   \[ Q(r)=\{ (x,t):|x-p|<r, \tau-r^2<t<\tau \} \]
   By the local maximum principle for parabolic equations (Theorem 8.1 in LSU\cite{Ladyzenskaja1968}), we get
   \[ \sup_{Q(\frac{r}{2})}|R|\leq C(r)\left(\|R\|_{L^2(Q(r))}+\|(|Ric|^2)\|_{L^s(Q(r))}\right) \]
   with $s>\frac{n}{2}$. By the derivative estimate (Theorem 11.1 in LSU\cite{Ladyzenskaja1968}), we obtain
   \[ \sup_{Q(\frac{r}{4})}|\nabla R|\leq C'(r)\left(\sup_{Q(\frac{r}{2})}|R|+\|(|Ric|^2)\|_{L^s(Q(\frac{r}{2}))}\right) \]
   Thus,
   \[ \sup_{Q(\frac{r}{4})}|\nabla R|\leq C(r)\left(\|R\|_{L^2(Q(r))}+\|(|Ric|^2)\|_{L^s(Q(r))}\right)\]
   Now we cover the sphere $\partial B_a$ with balls $B_i$ of radius $\frac{r}{4}$. Each point can be covered by at most $c(n)$ of balls $4B_i$, where $c(n)$ depends on the dimension. Then we apply the last inequality and integrate to get
   \begin{equation}\label{d}
     \int_{\partial B_a}|\nabla R|dS\leq C(n,r)\left( \|R\|_{L^2(A(a,r))}+\|(|Ric|^2)\|_{L^s(A(a,r))} \right)
   \end{equation}
   where $A(a,r)=\{ (x,t):d(x,\partial B_a)<r, \tau-r^2<t<\tau \}$.

   Since $g\in W^{2,p}_{-q}$, then $R\in L^p_{-q-2}\subset L^p_{-q-1}\subset L^2_{-\frac{n}{2}}=L^2$. Since
   \[ \|(|Ric|^2)\|_{L^s(A(a,r))}=\left( \int_{\tau-r^2}^{\tau}\int_{B_{a+r}\backslash B_{a-r}}|Ric|^{2s}dxdt \right)^{\frac{1}{s}} \]
   and $Ric\in L^p_{-q-2}$, by $p>n$, $q>\frac{n-2}{2}$, we can choose $n<2s<p$,  then
   \begin{equation*}
     \begin{split}
        &\int_{B_{a+r}\backslash B_{a-r}}|Ric|^{2s}dx  \\
          \leq& \left( \int_{B_{a+r}\backslash B_{a-r}}\left(|Ric|^{2s}\rho^{((q+2)p-n)\frac{2s}{p}} \right)^{\frac{p}{2s}}dx\right)^{\frac{2s}{p}}
          \left( \int_{B_{a+r}\backslash B_{a-r}}\left( \rho^{(n-(q+2)p)\frac{2s}{p}} \right)^{\frac{p}{p-2s}}dx \right)^{\frac{p-2s}{p}} \\
         \leq & \|Ric\|^{2s}_{L^p_{-q-2}}\cdot\left( \int_{B_{a+r}\backslash B_{a-r}}\rho^zdx \right)^{\frac{p-2s}{p}}
     \end{split}
   \end{equation*}
   where $z=(n-(q+2)p)\frac{2s}{p}\cdot\frac{p}{p-2s}$. Since
   \begin{equation*}
     \begin{split}
        z= & (n-(q+2)p)\frac{2s}{p}\cdot\frac{p}{p-2s} \\
          =& (\frac{2ns}{p}-2s(q+2))(1+\frac{2s}{p-2s}) \\
         = & -2s(q+2)+\frac{2s}{p-2s}(\frac{2ns}{p}-2s(q+2)+\frac{n(p-2s)}{p}) \\
         <& -s(n+2)+\frac{2s}{p-2s}(n-s(n+2))\\
         <& -\frac{n(n+2)}{2}+\frac{2s}{p-2s}(n-\frac{n(n+2)}{2})\\
         <& -\frac{n(n+2)}{2}<-(n+2)
     \end{split}
   \end{equation*}
   thus,
   \begin{equation*}
     \begin{split}
        \left( \int_{B_{a+r}\backslash B_{a-r}}\rho^zdx \right)^{\frac{p-2s}{p}}\leq &\left( \int_{B_{a+r}\backslash B_{a-r}}\rho^{-n-2}dx \right)^{\frac{p-2s}{p}}  \\
         \leq & C(n)\left( \int_{a-r}^{a+r}\rho^{-3}d\rho \right)^{\frac{p-2s}{p}} \\
          =& C(n)\left( (a-r)^{-2}-(a+r)^{-2} \right)^{\frac{p-2s}{p}} \\
         = & C(n)\left(\frac{4ar}{(a^2-r^2)^2}\right)^{\frac{p-2s}{p}}.
     \end{split}
   \end{equation*}
   Let $a\rightarrow\infty$ in \eqref{d}, we then get the result.
\end{proof}

\textbf{Remark:} In \cite{McFeron2012}, they prove Theorem \ref{16} by using the $L^{\infty}$ norm of $|Ric|^2$ for the reason that $|Ric|^2$ has decay rate less than $-n-2$. However, we assumed $Ric\in L^p_{-q-2}$, which is not sufficient if we still use $L^{\infty}$ norm of $|Ric|^2$. That is why we have to use the $L^{s}$ norm of $|Ric|^2$  with $s>\frac{n}{2}$.

Because of Theorem \ref{16}, Lemma 13 of \cite{McFeron2012} is valid in our situation, which implies \eqref{non} for $g_{\epsilon}(t)$. Now by Simon's result \eqref{derivative} and Theorem \ref{14}, we obtain $R(g(t))$ is nonnegative and integrable for $t>0$ and $m(g(t))\leq \liminf_{\epsilon\rightarrow0}m(g_{\epsilon}(t))$.
So Theorem \ref{massthm} implies $m(g(t))\geq0$.
Similarly, the ADM mass is still preserved under the Ricci flow in our case. This is easy to be proved by the way of \cite{McFeron2012} as long as we have the following
\begin{equation*}
  \begin{split}
     &\int_{\partial B_a}r^{-q-1}|Ric|dS \\
       \leq&  \left( \int_{\partial B_a}|Ric|^{p}r^{(q+2-\frac{n-1}{p})p}dS \right)^{\frac{1}{p}}\left(\int_{\partial B_a}r^{z}dS\right)^{\frac{p-1}{p}} \\
       =& \|Ric\|_{L^p_{-q-2}(\partial B_a)}\cdot\left(\int_{\partial B_a}r^zdS\right)^{\frac{p-1}{p}}
  \end{split}
\end{equation*}
where $z=(\frac{n-1}{p}-2q-3)\frac{p}{p-1}<-n-1$.

Finally, we can prove the positive mass theorem in our condition.

\begin{thm}
  Under the assumptions of Theorem \ref{main}, if $q=n-2$, we have $m_{ADM}(g)\geq 0$. Moreover, $m_{ADM}(g)=0$ if and only if $g$ is diffeomorphic to the Euclidean metric.
\end{thm}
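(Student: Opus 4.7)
The plan is to stitch together the three ingredients prepared in the preceding sections: the regularized metrics $g_\epsilon$ furnished by Theorem \ref{main}, Simon's $h$-flow which evolves each $g_\epsilon$ into a smooth, asymptotically flat $g_\epsilon(t)$ on a common time interval, and the classical positive mass theorem (Theorem \ref{massthm}), which is available since $3 \le n \le 7$. The idea is that for each small $\epsilon$ the Ricci flow washes out the (already small) negative part of the scalar curvature, so the classical theorem applies to the smoothed, flowed metric, and the various convergence results of Sections 3--5 let us push the inequality $m \ge 0$ back down to $g$.

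First I would fix $\epsilon>0$ small and invoke Theorem \ref{main} to obtain $g_\epsilon \in W^{2,p}_{-q}$ satisfying properties (1)--(5); in particular $R(g_\epsilon) \ge B$, $\int_{\{R(g_\epsilon)<0\}} |R(g_\epsilon)|\, d\mu_{g_\epsilon} < C(\epsilon)\to 0$, $R(g_\epsilon)\in L^1$, and (using $q = n-2$) $\liminf_{\epsilon\to 0} m_{ADM}(M,g_\epsilon) = m_{ADM}(M,g)$. After modifying $g_\epsilon$ to agree with the Euclidean metric outside a compact set, Theorem \ref{gt} produces the $g_\epsilon$-flow $g_\epsilon(t)$ on a common interval $(0,T]$, with the derivative bounds \eqref{derivative} uniform in $\epsilon$. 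Extracting a subsequence one obtains the limit flow $g(t)$, which lies in $C^{1,\alpha}_{-q'}$ with $q' > (n-2)/2$ on $[t_0,T]$ by the weighted argument already carried out in the excerpt.

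Next I would verify the hypotheses of Theorem \ref{14} for $g_\epsilon(t)$ at fixed $t \in (0,T]$. Theorem \ref{15} supplies $R(g_\epsilon(t)) \in L^1$ with uniform decay at infinity, and Theorem \ref{16}, combined with Lemma 13 of \cite{McFeron2012}, forces $\int_{\{R(g_\epsilon(t))<0\}} |R(g_\epsilon(t))|\, dV \to 0$ as $\epsilon \to 0$. Theorem \ref{14} then yields that $R(g(t))$ is nonnegative and integrable and that $m(g(t)) \le \liminf_{\epsilon \to 0} m(g_\epsilon(t))$. The weighted boundary estimate on $\int_{\partial B_a} r^{-q-1}|Ric|\, dS$ displayed at the end of Section 6 shows that the ADM mass is preserved along the $g_\epsilon$-flow, so $m(g_\epsilon(t)) = m(g_\epsilon) = m_{ADM}(M,g_\epsilon)$ via Theorem \ref{m}. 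Passing to the $\liminf$ gives $m(g(t)) \le m_{ADM}(M,g)$. Finally $g(t)$ is smooth and asymptotically flat of order $q'>(n-2)/2$ with nonnegative integrable scalar curvature, so Theorem \ref{massthm} yields $m(g(t)) \ge 0$, and hence $m_{ADM}(M,g) \ge 0$.

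For the rigidity, suppose $m_{ADM}(M,g) = 0$. Then $0 \le m(g(t)) \le 0$ for every $t \in (0,T]$, so by the equality case of Theorem \ref{massthm} each $g(t)$ is isometric to $(\mathbb{R}^n,\delta)$. Since $g(t) \to g$ uniformly on compact sets as $t \to 0^+$ (third bullet of Theorem \ref{gt}), a standard Arzela--Ascoli argument on the isometries extracts a limiting diffeomorphism identifying $(M,g)$ with Euclidean space. I expect the main obstacle to be the reduced regularity $W^{2,p}_{-q}$ of the smoothings compared with the $C^2_{-q}$ setting of \cite{McFeron2012}: this is what necessitates the weighted H\"older-exponent split in Theorems \ref{15} and \ref{16} (replacing the unavailable pointwise decay of $|Ric|^2$ by its $L^p_{-q-2}$ norm) and the weighted $L^p$--$L^{p/(p-1)}$ estimate for the mass-preservation computation. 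A secondary subtlety is that in the rigidity step the target metric $g$ is only $C^0 \cap W^{1,p}_{-q}$, so the limit of the smooth isometries must be handled with care to conclude that $g$ is genuinely isometric to Euclidean space.
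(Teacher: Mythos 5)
Your proposal follows essentially the same route as the paper: smooth $g$ to $g_\epsilon\in W^{2,p}_{-q}$ via Theorem \ref{main}, evolve each $g_\epsilon$ by Simon's $h$-flow to a common interval $(0,T]$, pass to the limit flow $g(t)$, verify the hypotheses of Theorem \ref{14} using Theorems \ref{15} and \ref{16} together with the mass-preservation computation, and combine the classical positive mass theorem $m(g(t))\ge 0$ with the chain $m(g(t))\le\liminf_\epsilon m(g_\epsilon(t))=\liminf_\epsilon m(g_\epsilon)=m_{ADM}(M,g)$ (the last step using $q=n-2$). This is exactly the paper's logic; the proof box in the paper is merely terser because all the verifications you spell out appear in the surrounding discussion of Section 6.

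The one place you deviate is in the rigidity case. The paper disposes of it by citing Theorem 18 of \cite{McFeron2012}, which is precisely the statement that if all the flowed metrics $g(t)$, $t>0$, are flat and $g(t)\to g$ as $t\to 0^+$, then $(M,g)$ is Euclidean; you instead sketch an Arzel\`a--Ascoli limit of the isometries $\Phi_t:(M,g(t))\to(\mathbb{R}^n,\delta)$. Your sketch is in the right spirit but is more delicate than it looks: the isometries $\Phi_t$ are only determined up to Euclidean motions, so you must normalize them (say by pinning down $\Phi_t$ at a point and a frame) before any compactness argument applies, and the limit map is a priori only a metric-space isometry of the $C^0$ metric $g$, so one needs a Myers--Steenrod-type argument to upgrade it. That is essentially what the McFeron--Szekelyhidi result packages up, and citing it is the cleaner move, but there is no gap in your plan --- you correctly flag this step as needing care.
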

\begin{proof}
   We have shown
   \[ 0\leq m(g(t))\leq \liminf_{\epsilon\rightarrow0}m(g_{\epsilon}(t))= \liminf_{\epsilon\rightarrow0}m(g_{\epsilon})\]
   and from Theorem \ref{main},
   \[ \liminf_{\epsilon\rightarrow0}m(g_{\epsilon})=m_{ADM}(g) \]
   then we get
   \[ m_{ADM}(g)\geq0. \]
   Now we suppose $m_{ADM}(g)=0$, then $m(g(t))=0$ for $t>0$. By theorem \ref{massthm}, $(M,g(t))\cong(\mathbb{R}^n,\delta)$. Then the rigidity part of the theorem follows from Theorem 18 of \cite{McFeron2012}.
\end{proof}

\bibliography{ricci-flow-positive-mass}

\end{document}